\newtheorem{thm}{Theorem}[section]
\newtheorem{rem}[thm]{Remark}
\newtheorem{defi}[thm]{Definition}
\newtheorem{ex}[thm]{Example}
\newtheorem{lem}[thm]{Lemma}
\newtheorem{prop}[thm]{Proposition}
\newtheorem{cor}[thm]{Corollary}
\newtheorem*{thmA*}{Theorem A}
\newtheorem*{thmB*}{Theorem B}
\def\A{{\mathcal A}}
\def\B{{\mathcal B}}
\def\L{{\mathcal L}}
\newcommand{\abs}[1]{\left\lvert#1\right\rvert} 
\newcommand{\rk}{\mathrm{rank}} 
\newcommand{\rank}{\mathrm{rank}} 
\title{Arithmetic non-very generic arrangements}
\author{Pragnya Das\and Takuya Saito\and Simona Settepanella}
\address[Das]{Department of Mathematics, HBS, Indian Institute of Information Technology, Manipur, India}
\address[Saito]{Institute for Chemical Reaction Design and Discovery, Hokkaido University}
\address[Settepanella]{Department of Statistics and Economics, Torino University, Italy}
\email[Das]{pragnya@iiitmanipur.ac.in}
\email[Saito]{saito@icredd.hokudai.ac.jp}
\email[Settepanella]{simona.settepanella@unito.it}
\subjclass[2020]{Primary 52C35; Secondary 05B25, 14N20.}
\keywords{hyperplane arrangements, intersection lattice, discriminantal arrangements.}
\date{\today}
\begin{document}
\maketitle
\begin{abstract}
A discriminantal hyperplane arrangement $\mathcal{B}(n,k,\mathcal{A})$ is constructed from a given (generic) hyperplane arrangement $\mathcal{A}$, which is classified as either very generic or non-very generic depending on the combinatorial structure of $\mathcal{B}(n,k,\mathcal{A})$. In particular, $\mathcal{A}$ is considered non-very generic if the intersection lattice of $\mathcal{B}(n,k,\mathcal{A})$ contains at least one non-very generic intersection - that is, an intersection that fails to satisfy a specific rank condition established by Athanasiadis in \cite{Atha}. In this paper, we present arithmetic criteria characterizing non-very generic intersections in discriminantal arrangements and we complete and correct a previous result by Libgober and the third author concerning rank-two intersections in such arrangements.\end{abstract}

\section{Introduction}

The discriminantal arrangement $\mathcal{B}(n,k,\mathcal{A})$, introduced by Manin and Schechtman in 1989, is an arrangement of hyperplanes $D_L$ that consists of non-generic parallel translates of a generic arrangement $\mathcal{A}$ of $n$ hyperplanes in a $k$-dimensional space (see \cite{MS}). Their original construction focused on cases where the combinatorics of $\mathcal{B}(n,k,\mathcal{A})$ remain constant as $\mathcal{A}$ varies within an open Zariski subset $\mathcal{Z}$ of all generic arrangements. However, cases where the combinatorics change were left unexamined. Falk was the first to highlight this distinction (see \cite{Falk}), and Bayer and Brandt further clarified it in \cite{BB}, introducing the terms \emph{very generic} for arrangements $\mathcal{A} \in \mathcal{Z}$ and \emph{non-very generic} otherwise.
\\
In \cite{Atha}, Athanasiadis proved a conjecture by Bayer and Brandt, giving a complete description of the combinatorics of $\mathcal{B}(n,k,\mathcal{A})$ when $\mathcal{A}$ is very generic. This condition which we call the \emph{Bayer-Brandt-Athanasiadis condition} (or BBA-condition), characterizes the intersection lattice $\mathcal{L}(\mathcal{B}(n,k,\mathcal{A}))$ in the very generic case. \\
One early example of a non-very generic arrangement (see \cite{crapo}) is the Crapo's configuration of six lines in $\mathbb{R}^2$ depicted in Figure~\ref{fig:crapo}, in which the intersection lattice $\mathcal{L}(\mathcal{B}(6,2,\mathcal{A}))$ contains a rank 3 intersection of multiplicity 4 which fails the BBA-condition. 
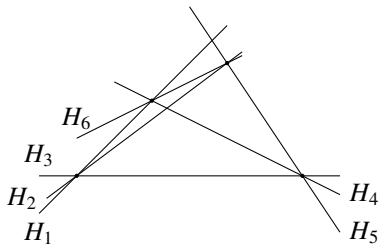
\begin{figure}[h!]
\centering
\begin{tikzpicture}[scale=0.50]
\coordinate (A) at (0, 0);
\coordinate (B) at (6, 0);
\coordinate (C) at (4, 3);
\coordinate (D) at (2, 2);
\draw (-1,-1) node[below] {$H_1$} -- (A)--(D) -- (4,4);        
\draw (-0.8,-0.6) node[left] {$H_2$} -- (4.4,3.3);            
\draw (-1,0) node[above] {$H_3$} -- (7,0);                    
\draw (1,2.5) -- (7,-0.5) node[right] {$H_4$};                
\draw (7,-1.5) node[right] {$H_5$} -- (3,4.5);                
\draw (0,1) node[above] {$H_6$} -- (D) -- (4.4,3.2);           
\foreach \point in {A,B,C,D}
\fill [black] (\point) circle (1.6pt);
\end{tikzpicture}
\caption{Crapo's arrangement of six lines}
\label{fig:crapo}
\end{figure}
\\
However, the BBA-condition is global and does not allow one to determine locally whether a given intersection is very generic. Indeed Yamagata and the third author recently gave two examples of non-very generic arrangements (see Examples 4.2 and 4.3 in \cite{SeSo}) where certain intersections in $\mathcal{L}(\mathcal{B}(n,k,\mathcal{A}))$ satisfy the BBA-condition, but are non-very generic. 
These results demonstrate that the failure of the BBA-condition is sufficient - but not necessary - for an intersection to be non-very generic. Motivated by this, we introduce the following distinction:

\bigskip
\textit{A non-very generic intersection is called \emph{arithmetic non-very generic} (ANVG) if its combinatorics fail the BBA-condition and \emph{geometric non-very generic} (GNVG) if its combinatorics satisfy the BBA-condition.}
\bigskip

\noindent
Consequently we define a non-very generic arrangement \emph{arithmetic non-very generic} (ANVG) if all its non-very generic intersections are ANVG and \emph{geometric non-very generic} (GNVG) otherwise. For example, Crapo's arrangement in Figure~\ref{fig:crapo} is ANVG, while the examples in \cite{SeSo} are GNVG arrangements.
\\
In this paper we focus on  ANVG arrangements. In particular in Lemma~\ref{nonvg-bba}, we characterize when the failure of the BBA-condition is both necessary and sufficient for an intersection to be non-very generic.
\\
More in details, in Section~\ref{sec:preli}, we review the definitions and properties of discriminantal arrangements. Section~\ref{sec:NVI} discusses the BBA-condition and criteria for ANVG intersections. In Section~\ref{sec:MNG}, we define and analyze  \emph{minimal non-very generic intersections}, which satisfy structural properties that lead to the inequality criteria summarized in Theorem~\ref{Thm:B}. These inequalities are particularly useful for constructing new families of non-very-generic arrangements. Section~\ref{sec:rank2} presents a complete classification of rank-2 intersections in discriminantal arrangements. In particular, by means of Falk's adjoint construction (see \cite{Falk}), we construct arrangements - starting with 8 hyperplanes in $\mathbb{R}^5$ - in which rank-2 intersections of multiplicity 4 arise. This provides counterexamples that correct a claim by Libgober and Settepanella \cite{LS}, who stated that rank-2 intersections in $\mathcal{B}(n,k,\mathcal{A})$ can only have multiplicity $2$, $3$, or $k+2$.

\section*{Acknowledgment}
This work is supported by JSPS KAKENHI Grant Number JP23KJ0031.

\section{Preliminaries}\label{sec:preli}
Throughout this paper, we choose $\mathbb{R}$ to be the coefficient field for convinience, but the same discussion can be made if we assume other fields instead of $\mathbb{R}$. \\
Let $\A = \{ H_1, \dots, H_n \}$ be a central arrangement in $\mathbb{R}^k,k<n$, such that any $m$ hyperplanes intersect in codimension $m$ at any point except for the origin for any $m \leq k$. We will call such an arrangement a generic arrangement.
In the rest of this paper, we assume $\A$ to be a generic arrangement consisting of hyperplanes indexed by $[n]=\{1,\ldots,n\}$ in $\mathbb{R}^k$ unless otherwise mentioned. Given any arrangement $\mathcal{A}$ we denote by $\mathcal{L}(\mathcal{A})$ its intersection poset.
\subsection{Discriminantal Arrangements and its combinatorics.} Let $\A=\{H^0_i\}_{i \in [n]}$ be a generic arrangement of $n$ hyperplanes in $\mathbb{R}^k, k<n$ and $\alpha_i \in \mathbb{R}^k$ the normal vectors to the hyperplanes $H^0_i, i \in [n]$. 
The space $\mathbb{S}(H_1^0,...,H_n^0)$ (or simply $\mathbb{S}$ when the 
dependence on $H_i^0$ is clear or not essential) is the space of parallel translation $\A^t$ of the arrangement $\A$. In other words, $\mathbb{S}$ is the space of $n$-tuples $(H_1^{t_1},\ldots,H_n^{t_n})$, where $H_i^{t_i}=H^0_i+t_i\alpha_i$ is a parallel translation of $H_i^0\in\A$ by $t_i \in\mathbb{R}$. For any subset of indices $L \subset [n]$, with $|L|=k+1$, the set $D_L=\{(H_1^{t_1},\ldots,H_n^{t_n})\in \mathbb{S} \mid \cap_{i\in L} H^{t_i}_i\neq \phi\}$ is a hyperplane in the space $\mathbb{S}$ (see  \cite{MS}). The discriminantal arrangement $\B(n, k, \A)$ is defined as: 
$$\B(n, k, \A)=\{D_L \subset  \mathbb{S} \mid L \subset [n], | L |=k+1\} \quad .$$
Notice that there is a natural identification of $\mathbb{S}$ with the $n$-dimensional affine space $\mathbb{R}^n$ given by the correspondence of $(H_1^{t_1},\ldots,H_n^{t_n})$ with the $n$-tuple $(t_1,\ldots, t_n)$.\\
It is well known (see, among others \cite{MS},\cite{crapo}) that there exists an open Zariski set $\mathcal{Z}$ in the space of generic arrangements of $n$ hyperplanes in $\mathbb{R}^k$, such that the $\L(\mathcal{B}(n,k,\A))$ is independent from the choice of the arrangement $\A \in  \mathcal{Z}$. Accordingly to Bayer and Brandt (see \cite{BB}) we will call the arrangements $\A \in  \mathcal{Z}$ \textit{very generic} and the others \textit{non-very generic}.

\subsection{Bayer-Brandt-Athanasiadis condition.}\label{BBA-c}In \cite{Atha}, Athanasiadis proved a conjecture given by Bayer and Brandt which stated that the intersection lattice of the discriminantal arrangement $\mathcal{B}(n,k,\A)$, when the arrangement $\mathcal{A}$ is very generic, is isomorphic to the collection of all sets $\{S_1, \ldots, S_m\}$, $S_i$ $\subset$ $[n]=\{1,\ldots,n\}$, $\abs{S_i} \geq k+1$ such that
\begin{equation}\label{eq:vgcon}
\abs{\bigcup_{i \in I} S_i} > k + \sum_{i \in I}(\abs{S_i}- k) \mbox{ for all } I \subset [m]=\{1,\ldots,m\}, \mid I \mid \geq 2 \quad .
\end{equation}
 The isomorphism is the natural one which associate to the set $S\subset[n]$ the space $D_S=\bigcap_{L \subset S, \mid L \mid=k+1} D_L, D_L \in \mathcal{B}(n,k,\A)$ of all translations $\mathcal{A}^t$  of $\A$ having hyperplanes indexed in $S$ intersecting in a not empty space. In particular $\{S_1, \ldots, S_m\}$ will correspond to the intersection $\bigcap_{i=1}^m D_{S_i}$. \\We call the condition provided in Equation \eqref{eq:vgcon} as the Bayer-Brandt-Athanasiadis-condition or the \textit{BBA-} condition. \\
Following Bayer, Brandt and Athanasiadis (see \cite{Atha},\cite{BB}), we denote by $P(n,k)$ the poset of all the sets $\{S_1, \ldots, S_m\}$, $S_i$ $\subset$ $[n]=\{1,\ldots,n\}$, $\abs{S_i} \geq k+1$ satisfying the Equation \eqref{eq:vgcon}. The elements in $P(n,k)$ are partially ordered by $S\preceq T$ if every subet in $S$ is contained in some subet of $T$. \\
For a generic arrangement $\A$ and an index subset $S\subset[n], \abs{S}>k$, it holds that the $\rank D_S=\abs{S}-k$. Hence if $\mathcal{A}$ is very generic and the BBA-condition is satisfied then the subspaces $D_{S_i},i\in\{1,\ldots,m\}$ intersect transversally (see \cite{Atha}) or, equivalently, 
\begin{equation}\label{eq:Atharank}
\rank\bigcap_{i=1}^m D_{S_i}=\sum_1^m (|S_i|-k).
\end{equation}

\section{Athanasiadis rank for non-very generic intersections}\label{sec:NVI}
An arrangement $\A$ is either very generic or non-very generic depending on the intersection lattice of $\B(n,k,\A)$. Hence, from this section onwards we will focus on the study of intersections $X\in\L(\B(n,k,\A)).$
\subsection{Canonical presentation of intersections in $\L(\B(n,k,\A))$}

The following definition will play a central role in the rest of the paper. 

\begin{defi}\label{X-max} 
Given an intersection $X\in\L(\B(n,k,\A))$ we call $D_S$ the \textbf{components} of $X$ if $X\subset D_S$ and $X\not\subset D_{S\cup\{j\}}$ for all $j\in[n]\setminus S$.
In particular for an intersection $X=\bigcap_{i=1}^m D_{S_i}\in\L(\B(n,k,\A))$, the set of indices $\{S_1,\ldots,S_m\}$ corresponding to all the components $\{D_{S_i},\ldots,D_{S_m}\}$  of $X$ is called the \textbf{canonical presentation} of $X$.
\end{defi}

\noindent
Let's remark that while there are several ways to represent an element $X \in \L(\B(n,k,\A))$, its components are unique and so is its canonical presentation. This motivates the following definition.

\begin{defi}[Athanasiadis rank]
Given an intersection $X=\bigcap_{i=1}^mD_{S_i}\in\L(\B(n,k,\A))$ with canonical presentation $\{S_1,\ldots,S_m\}$, we call the natural number $a_X=\sum_{i=1}^m(|S_i|-k)$ the \textbf{Athanasiadis rank} of $X$.
\end{defi}
\noindent
By Equation \eqref{eq:Atharank} if an arrangment $\A$ is very generic then all the intersections $X\in\L(\B(n,k,\A))$ satisfy $\rk(X)=a_X$.
\begin{defi}[Non-very generic intersection] 
Given an intersection $X=\bigcap_{i=1}^mD_{S_i}\in\L(\B(n,k,\A))$ with canonical presentation $\{S_1,\ldots,S_m\}$, we call $X$ very generic if $\rk(X)=a_X$ and non-very generic otherwise.
\end{defi}
\noindent

\noindent
While the first known examples of non-very generic intersections all failed the BBA-condition defined in Subsection \ref{BBA-c},  as shown in \cite{SeSo}, there are non-very generic intersections which satisfy it. Hence we further distinguish between the non-very generic intersections as follows.
\begin{defi}
We call a non-very generic intersection $X=\bigcap_{i=1}^mD_{S_i}\in\L(\B(n,k,\A))$ \textit{geometric non-very generic} or \textit{GNVG} if its canonical presentation $\{S_1,\ldots,S_m\}$ satisfies the BBA-condition and  \textit{arithmetic non-very generic} or \textit{ANVG} otherwise.
\end{defi}
\noindent
It is important to remark here that in \cite{Saito}, Lemma 3.8, second author shows that if $\A$ is a non-very generic arrangement then for any intersection $X \in \L(\B(n,k,\A))$ which is GNVG, it exists an intersection $Y \subset X$ which is ANVG. It follows that we can have non-very generic arrangements $\A$ which are \textit{arithmetic non-very generic}, i.e. all non-very generic intersections in $\L(\B(n,k,\A))$ are ANVG, but there are not \textit{geometric non-very generic} arrangements.\\
While the study of GNVG intersections seem to be very difficult, the one of the ANVG is simply related to the failing of the BBA-condition, i.e., to be an arithmetic non-very generic arrangement is completely classified by the BBA-condition. Hence our goal is to study under which conditions a non-very generic arrangement $\A$ is arithmetic. In the rest of this section we will focus on the study of conditions for which a non-very generic intersection is ANVG. 
\noindent
In order to simplify the exposition, from now on whenever we write $X=\bigcap_{i=1}^m D_{S_i}\in\L(\B(n,k,\A))$ we intend that $\{S_1,\ldots,S_m\}$ is the canonical presentation of $X$, unless differently specified. 

\begin{lem}\label{nonvg-bba}
Let $X=\bigcap_{i=1}^m D_{S_i}$ be an intersection in $\L(\B(n,k,\A))$ and $\mathbb{T}\in P(n,k)$ be a set which satisfies the condition: 
\begin{equation}\label{eq:ANVG}
   \begin{aligned}
\forall \, S_i \in \{S_1,\ldots,S_m\} \, \exists \, T\in\mathbb{T}\text{ such that }S_i\subset T,  \  \text{ and }  \  \rank(X)\geq\sum_{T\in\mathbb{T}} (\abs{T}-k)-\abs{\mathbb{T}_0},
   \end{aligned}
\end{equation} 
where $\mathbb{T}_0:=\{T\in\mathbb{T}\mid \abs{\{i\mid S_i\subset T\}}\geq 2\}$. Then $X$ is a non-very generic intersection if and only if it is an ANVG intersection.
\end{lem}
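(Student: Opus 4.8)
The plan is to prove the two implications separately, noting that only one carries content. Since by definition an ANVG intersection is in particular non-very generic, the ``if'' direction is automatic, and the whole argument lies in showing that a non-very generic $X$ satisfying \eqref{eq:ANVG} must be ANVG, i.e. must fail the BBA-condition. I would argue this by contraposition: assume $X$ is non-very generic but \emph{satisfies} the BBA-condition (so that $X$ would be GNVG), and derive a contradiction with \eqref{eq:ANVG}. Two facts get combined. First, non-very generic means $\rank(X)<a_X$, since $\rank(X)$ is always bounded above by the Athanasiadis rank $a_X$, with equality precisely in the very generic case (as recorded after \eqref{eq:Atharank}). Second, \eqref{eq:ANVG} gives $\rank(X)\ge \sum_{T\in\mathbb{T}}(\abs{T}-k)-\abs{\mathbb{T}_0}$. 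Chaining these, the contradiction follows once I establish that the BBA-condition forces
\[
a_X\le \sum_{T\in\mathbb{T}}(\abs{T}-k)-\abs{\mathbb{T}_0}.
\]

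The heart of the argument is this last inequality, which I would obtain by a local-to-global counting estimate. For each $T\in\mathbb{T}$ put $J_T=\{i\mid S_i\subset T\}$, so that $\mathbb{T}_0=\{T\mid \abs{J_T}\ge 2\}$. For a fixed $T$ with $\abs{J_T}\ge 2$, applying the BBA-condition \eqref{eq:vgcon} to the index set $I=J_T$ and using $\bigcup_{i\in J_T}S_i\subseteq T$ yields $\sum_{i\in J_T}(\abs{S_i}-k)<\abs{T}-k$, hence $\sum_{i\in J_T}(\abs{S_i}-k)\le \abs{T}-k-1$ since these quantities are integers. For $T$ with $\abs{J_T}\le 1$ the same sum is bounded trivially by $\abs{T}-k$ (using $S_i\subset T$ when $\abs{J_T}=1$, and $\abs{T}\ge k+1$ coming from $\mathbb{T}\in P(n,k)$ when $J_T=\emptyset$). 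In all cases this produces the uniform per-$T$ bound $\sum_{i\in J_T}(\abs{S_i}-k)\le (\abs{T}-k)-[\,T\in\mathbb{T}_0\,]$, where $[\,\cdot\,]$ denotes the indicator.

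It then remains to sum over $T\in\mathbb{T}$. Summing the per-$T$ bounds and interchanging the order of summation, the left-hand side equals $\sum_{i=1}^m(\abs{S_i}-k)\cdot\abs{\{T\in\mathbb{T}\mid S_i\subset T\}}$, which is at least $a_X=\sum_{i=1}^m(\abs{S_i}-k)$ because $\abs{S_i}-k\ge 1$ and, by the first half of \eqref{eq:ANVG}, every $S_i$ lies in at least one $T\in\mathbb{T}$; meanwhile the right-hand side is exactly $\sum_{T\in\mathbb{T}}(\abs{T}-k)-\abs{\mathbb{T}_0}$, giving the displayed inequality. I expect the one real subtlety to be precisely why the crude per-$T$ estimate must be summed ``with multiplicity'' rather than over a partition of the indices: the sets in $\mathbb{T}$ may overlap, so a single $S_i$ can be contained in several $T$'s, and it is the positivity $\abs{S_i}-k\ge 1$ that makes this over-counting run in the favorable direction. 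Beyond that, I would only verify that the appeal to \eqref{eq:vgcon} is legitimate (it is, since $\abs{J_T}\ge 2$ exactly for $T\in\mathbb{T}_0$) and that the bound $\rank(X)\le a_X$ is available.
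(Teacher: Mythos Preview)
Your argument is correct and follows essentially the same contrapositive route as the paper: assume the BBA-condition holds, bound each ``local'' contribution $\sum_{i:S_i\subset T}(\abs{S_i}-k)$ by $\abs{T}-k-\epsilon_T$, sum over $T\in\mathbb{T}$, and contradict $\rank(X)<a_X$. In fact you are slightly more careful than the paper at one point: the paper writes $a_X=\sum_{T\in\mathbb{T}}a_{X\mid T}$ as an equality, whereas you correctly observe that a given $S_i$ may lie in several $T$'s, so one only gets $a_X\le\sum_{T}\sum_{i\in J_T}(\abs{S_i}-k)$ via the positivity $\abs{S_i}-k\ge 1$; the inequality is in the right direction and the argument goes through unchanged.
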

\begin{proof}
If canonical presentation $\{S_1,\ldots,S_m\}$ of the intersection $X=\bigcap_{i=1}^m D_{S_i}\in\L(\B(n,k,\A))$ doesn't satisfy the BBA-condition then $X$ is a non-very generic intersection. \\
Let $X=\bigcap_{i=1}^m D_{S_i}\in\L(\B(n,k,\A))$ be a non-very generic intersection.
     We define:
     \begin{equation*}
         X|T:=\bigcap_{S_i\subset T} D_{S_i}\mbox{ and } \epsilon_T:=\left\{\begin{aligned}0&&\mbox{ if }&T\notin\mathbb{T}_0\\ 1&&\mbox{ if }&T\in\mathbb{T}_0 \end{aligned}\right.
     \end{equation*}
    for $T\in\mathbb{T}$.
     Let us assume that \textit{BBA-} condition is satisfied then
     \begin{equation*}
         a_{X|T}\leq \abs{\bigcup_{S_i\subset T} S_i}-k-\epsilon_T\leq\abs{T}-k-\epsilon_T.
     \end{equation*}
     Thus $a_X=\sum_{T\in \mathbb{T}}a_{X|T}\leq \sum_{T\in\mathbb{T}}(\abs{\bigcup_{S_i\subset T} S_i}-k-\epsilon_T)\leq \sum_{T\in\mathbb{T}}(\abs{T}-k-\epsilon_T)=\sum_{T\in\mathbb{T}} (\abs{T}-k)-\abs{\mathbb{T}_0}\leq \rank(X)$ which is a contradiction. Hence the \textit{BBA-}condition fails.
\end{proof}
\noindent
A trivial example of this lemma are intersections $X=\bigcap_{i=1}^m D_{S_i}$ with $\rank(X)=3$ in an arrangement of $6$ lines in the real plane. In this case, if we fix $\mathbb{T}=\mathbb{T}_0=\{123456\}$, then all intersections $X=\bigcap_{i=1}^m D_{S_i}$ trivially satisfy the condition that $S_i\in\mathbb{T}$ and if $\rank(X)=3$ then the condition $\rank(X)\geq \sum_{T\in \mathbb{T}}(\abs{T}-2)-\abs{\mathbb{T}_0}=(6-2)-1=3$ is satisfied. That is all the non-very generic intersections in the Crapo's arrangement of $6$ lines in $\mathbb{R}^2$  fail the BBA-condition as in the example described in the Introduction (see Figure \ref{fig:crapo}).

\begin{lem}
Let $X=\bigcap_{i=1}^m D_{S_i}$ be an intersection of rank $n-k-1$ in $\L(\B(n,k,\A))$. If $X$ is a non-very generic intersection then $\abs{\bigcup_{i=1}^m S_i}=|\bigcup_{X\subset D_L}L|=n$. 
\end{lem}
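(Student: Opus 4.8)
The plan is to reduce everything to the single union $U:=\bigcup_{i=1}^m S_i$ and to prove $\abs{U}=n$. First I would observe that the two displayed unions are in fact the \emph{same} set: by the definition of components, for $\abs{L}=k+1$ one has $X\subset D_L$ if and only if $L\subset S_i$ for some $i$. The ``if'' direction is immediate from $D_{S_i}=\bigcap_{L\subset S_i}D_L\subset D_L$; for the ``only if'' direction I would start from such an $L$ and keep adjoining indices $j$ for which $X\subset D_{S\cup\{j\}}$ until the set can no longer be enlarged, at which point it is by definition a component, hence some $S_i\supset L$. Since each $S_i$ (of size $\geq k+1$) is the union of its $(k+1)$-subsets $L$, all of which satisfy $X\subset D_{S_i}\subset D_L$, this yields $\bigcup_{X\subset D_L}L=\bigcup_{i=1}^m S_i=U$. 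Thus both cardinalities equal $\abs{U}$, and it suffices to show $\abs{U}=n$.

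Next I would produce a lower bound. Because every $S_i\subset U$, we have $D_U=\bigcap_{L\subset U}D_L\subset D_{S_i}$ for each $i$, hence $D_U\subset X$. Using the preliminary fact $\rank D_U=\abs{U}-k$ for the generic arrangement $\A$, this containment gives $n-k-1=\rank(X)\leq\rank(D_U)=\abs{U}-k$, i.e. $\abs{U}\geq n-1$. Since trivially $\abs{U}\leq n$, only the two values $\abs{U}\in\{n-1,n\}$ remain.

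It remains to exclude $\abs{U}=n-1$, and this is exactly where the non-very generic hypothesis enters. If $\abs{U}=n-1$ then $\rank(X)=\abs{U}-k=\rank(D_U)$; combined with $D_U\subset X$ and the fact that both are flats of $\L(\B(n,k,\A))$, equality of rank forces $X=D_U$. I would then check that the canonical presentation of $D_U$ is precisely $\{U\}$: the set $U$ is a component since $D_U\not\subset D_{U\cup\{j\}}=D_{[n]}$ (the latter has strictly larger rank $n-k$), whereas no proper $S\subsetneq U$ can be a component, because for any $j\in U\setminus S$ one has $D_U\subset D_{S\cup\{j\}}$, contradicting maximality. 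Hence $m=1$ and $S_1=U$, so $a_X=\abs{U}-k=n-1-k=\rank(X)$, which would make $X$ very generic --- contradicting the hypothesis. Therefore $\abs{U}=n$.

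The routine containments and the rank identity $\rank D_U=\abs{U}-k$ are the easy part. The point requiring most care is the characterization $X\subset D_L\iff L\subset S_i$ for some component, which identifies the two unions, together with the uniqueness-of-component argument in the degenerate case $\abs{U}=n-1$; it is precisely in closing this last case that non-very-genericity is used to derive the contradiction.
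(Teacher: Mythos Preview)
Your proof is correct. It differs from the paper's, which argues in one stroke: assuming $\abs{U}<n$, pick $L\in\binom{[n]}{k+1}$ with $L\not\subset U$, whence $X\not\subset D_L$, so $\rank(X\cap D_L)=n-k$ and $X\cap D_L$ is the center; the paper then asserts $a_{X\cap D_L}=a_X+1>\rank(X\cap D_L)$, making the center non-very generic --- absurd. Your route instead first bounds $\abs U\geq n-1$ via the containment $D_U\subset X$ together with $\rank D_U=\abs U-k$, and then disposes of $\abs U=n-1$ by showing it forces $X=D_U$, whose canonical presentation is the singleton $\{U\}$, so $a_X=\rank X$. Your argument is a little longer but is explicit about canonical presentations throughout; in particular it sidesteps the delicate point in the paper's equality $a_{X\cap D_L}=a_X+1$, which tacitly presumes $\{S_1,\dots,S_m,L\}$ is the canonical presentation of $X\cap D_L$ even though this intersection is the center (whose canonical presentation is in fact $\{[n]\}$). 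You also spell out why the two unions in the statement coincide, a point the paper treats as self-evident from the definition of components.
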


\begin{proof}
 Let $X=\bigcap_{i=1}^m D_{S_i}$ be a non-very generic intersection of rank $n-k-1$ in $\L(\B(n,k,\A))$ and assume that $\abs{\bigcup_{i=1}^m S_i}=|\bigcup_{X\subset D_L}L|<n$. Then it exists a subset $L \in \binom{[n]}{k+1}\text{ such that } L\not\subset\bigcup_{i=1}^mS_i \text{ and hence }X\not\subset D_L$. It follows that the $\rk(X\cap D_L)=\rk(X)+\rk(D_L)=n-k,$ that is $X\cap D_L$ is central. On the other hand $X$ is a non-very generic intersection so the $\rk(X \cap D_L)< a_X+1=a_{X\cap D_L}.$ Hence $X\cap D_L$ is a central and non-very generic intersection which is absurd. 
\end{proof}
\noindent
The above lemma and Lemma \ref{nonvg-bba} when $\abs{\mathbb{T}}=1$ give us the following theorem:
\begin{thm}
Let $X=\bigcap_{i=1}^m D_{S_i}$ be an intersection of rank $n-k-1$ in $\L(\B(n,k,\A))$ 
 then $X$ is a non-very generic intersection if and only if it is ANVG.
\end{thm}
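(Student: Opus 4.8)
The plan is to combine the two immediately preceding results. The final theorem concerns an intersection $X=\bigcap_{i=1}^m D_{S_i}$ of rank $n-k-1$, and the strategy is to verify that such an $X$ automatically satisfies the hypothesis \eqref{eq:ANVG} of Lemma~\ref{nonvg-bba} for the simplest possible choice of the auxiliary set, namely $\abs{\mathbb{T}}=1$. Once this is checked, Lemma~\ref{nonvg-bba} gives exactly the claimed equivalence between being non-very generic and being ANVG.

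First I would take $X$ to be non-very generic (the reverse implication, that ANVG forces non-very generic, is trivial since failing the BBA-condition always produces a non-very generic intersection, as recorded at the start of the proof of Lemma~\ref{nonvg-bba}). The preceding lemma then applies: for a non-very generic intersection $X$ of rank $n-k-1$ it yields $\abs{\bigcup_{i=1}^m S_i}=n$. This is the pivotal input, because it means the full index set $[n]$ itself is the natural candidate for the single element of $\mathbb{T}$.

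Next I would set $\mathbb{T}=\{[n]\}$, so that $\abs{\mathbb{T}}=1$ and $[n]\in P(n,k)$ trivially (a single block satisfies the BBA-condition vacuously, since that condition only imposes inequalities on families indexed by $I$ with $\abs{I}\geq 2$). The containment requirement of \eqref{eq:ANVG} is immediate: every $S_i\subset[n]$, so taking $T=[n]$ works for all $i$. Since there are $m\geq 2$ sets $S_i$ contained in $[n]$ (an intersection of this rank cannot be a single $D_{S_i}$), we have $[n]\in\mathbb{T}_0$, hence $\abs{\mathbb{T}_0}=1$. The rank inequality in \eqref{eq:ANVG} then reads $\rank(X)\geq(\abs{[n]}-k)-1=n-k-1$, which holds with equality by hypothesis. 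Thus all conditions of Lemma~\ref{nonvg-bba} are met, and the lemma delivers that $X$ is ANVG.

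The only genuinely substantive step is the appeal to the preceding lemma guaranteeing $\abs{\bigcup_{i=1}^m S_i}=n$; everything after that is a clean bookkeeping verification that \eqref{eq:ANVG} holds for $\mathbb{T}=\{[n]\}$. The one point to state carefully is that $m\geq 2$, so that $[n]$ lands in $\mathbb{T}_0$ and contributes the $-\abs{\mathbb{T}_0}=-1$ term that makes the rank bound tight; if one overlooked this, the inequality would read $\rank(X)\geq n-k$, which would be false. I do not anticipate any real obstacle, as the theorem is essentially an instantiation of Lemma~\ref{nonvg-bba} in the extremal rank case.
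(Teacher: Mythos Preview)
Your proposal is correct and matches the paper's approach exactly: the paper simply states that the theorem follows from the preceding lemma combined with Lemma~\ref{nonvg-bba} applied with $\abs{\mathbb{T}}=1$, which is precisely the instantiation you spell out. One small refinement: the reason $m\geq 2$ is that a single $D_{S_1}$ always has $a_X=\abs{S_1}-k=\rank(D_{S_1})$ and is therefore very generic, not that rank $n-k-1$ excludes it (taking $\abs{S_1}=n-1$ would give exactly that rank).
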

\begin{cor} An arrangement $\A$ with $n+3$ hyperplanes in $\mathbb{C}^n$ is non-very generic if and only if it is an ANVG arrangement.
\end{cor}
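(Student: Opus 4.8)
The plan is to deduce the corollary from the Theorem by showing that, for an arrangement of $n+3$ hyperplanes in $\mathbb{C}^n$, every non-very generic intersection necessarily has rank $2$. Here the number of hyperplanes is $n+3$ and the ambient dimension is $n$, so applying the Theorem with its parameter ``number of hyperplanes'' equal to $n+3$ and its parameter ``dimension'' equal to $n$, the quantity $n-k-1$ appearing there becomes $(n+3)-n-1=2$. Moreover $\B(n+3,n,\A)$ is a central arrangement in $\mathbb{S}\cong\mathbb{C}^{n+3}$ of rank $(n+3)-n=3$, so every $X\in\L(\B(n+3,n,\A))$ has $\rank(X)\in\{0,1,2,3\}$, and it suffices to rule out that a non-very generic intersection has rank $0$, $1$ or $3$.

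First I would dispose of the extreme ranks. The rank $0$ flat is the whole space and is very generic. For a rank $1$ flat $X$, any component $D_S$ satisfies $X\subseteq D_S$ with $\rank D_S=|S|-k\geq 1$; since $X\subseteq D_S$ forces $\rank D_S\leq\rank(X)=1$, we get $|S|=k+1$ and $X=D_S$. Because the defining equation of $D_L$ has normal vector supported exactly on $L$ for a generic $\A$, distinct $(k+1)$-subsets yield distinct hyperplanes $D_L$, so $X$ has a single component and $a_X=|S|-k=1=\rank(X)$; thus $X$ is very generic. For rank $3$, which is maximal, the intersection lattice of the central arrangement $\B(n+3,n,\A)$ has a unique flat of maximal rank, its center $\bigcap_{|L|=k+1}D_L$. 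I would identify this center with $D_{[n+3]}$, using that the vanishing of all $(k+1)\times(k+1)$ minors of the augmented coefficient matrix is the same condition as all $n+3$ translated hyperplanes sharing a common point, and observe that every $D_S$ with $|S|\geq k+1$ contains it. Hence its canonical presentation is the single set $[n+3]$ and $a_X=(n+3)-n=3=\rank(X)$, so the center is very generic as well.

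With these cases excluded, every non-very generic intersection of $\B(n+3,n,\A)$ has rank exactly $2=(n+3)-n-1$, so the Theorem applies and such an intersection is non-very generic if and only if it is ANVG. The corollary then follows: if $\A$ is ANVG it is by definition non-very generic; conversely, if $\A$ is non-very generic it admits at least one non-very generic intersection, every one of which has rank $2$ and is therefore ANVG, so all its non-very generic intersections are ANVG and $\A$ is an ANVG arrangement.

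I expect the main obstacle to be the bookkeeping at the top of the lattice: carefully justifying that the unique maximal flat is $D_{[n+3]}$, that its canonical presentation is the single set $[n+3]$ (so that no multi-component description inflates its Athanasiadis rank), and confirming $a_X=3$ there. The rank $1$ analysis rests on the fact that distinct $(k+1)$-subsets yield distinct discriminantal hyperplanes, which I would record explicitly from the generic-support property of the defining equations of the $D_L$.
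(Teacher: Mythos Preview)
Your proposal is correct and follows the route the paper intends: the corollary is stated in the paper without proof as an immediate consequence of the preceding theorem on rank $n-k-1$ intersections, and your argument simply makes explicit why the only non-very generic intersections that can occur in $\L(\B(n+3,n,\A))$ are those of rank $2=(n+3)-n-1$. Your handling of ranks $0$, $1$, and $3$ (the center $D_{[n+3]}$ with single-set canonical presentation) is accurate and is exactly the bookkeeping the paper leaves to the reader.
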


\section{Minimal non-very generic intersections}\label{sec:MNG}

In this section we define minimal non-very generic intersections in $\L(\B(n,k,A))$ and we study their properties. We recall here that when we write $X=\bigcap_{i=1}^m D_{S_i} \in \L(\B(n,k,\A))$ we intend that $\{S_1,\ldots,S_m\}$ is the canonical presentation of $X$, unless differently specified.

\begin{defi}[Minimal non-very generic intersection]
A non-very generic intersection $X\in\L(\B(n,k,\A))$ is \textbf{minimal} if and only if any intersection $Y\supsetneq X$ in $\L(\B(n,k,\A))$ is very generic.\end{defi}
\begin{rem}
A non-very generic intersection 
is either minimal or it is intersection of very generic intersections with minimal ones.
\end{rem}

\noindent
For an intersection $X=\bigcap_{i=1}^mD_{S_i}\in \L(\B(n,k,\A))$, we define 
\begin{equation*}
    \ell_X=\mathrm{min}_{j\in \bigcup_{i}S_i}\abs{\{i\in [m]\mid j\in S_i\}}.
\end{equation*}

\noindent
The following theorem is the main result of this section.
\begin{thm}\label{Thm:B}
    Let $X$ be a minimal non-very generic intersection in $\L(\B(n,k,\A))$, then:
    \begin{enumerate}
    	\item $2\leq \ell_X\leq k$ \textup{(Lemma \ref{prop:Bound_ell} and Lemma \ref{prop:UpperBoundOfell}),}
        \item $a_X\geq \ell_X\abs{\bigcup_{i}S_i}/(k+1)$ \textup{(Theorem \ref{prop:LowerBoundOfAthNum}),}
        \item $\rank(X)<a_X< \rank(X)+\ell_X$ \textup{(Theorem \ref{prop:UpperBoundOfAthNumb}),}
    \end{enumerate}
\end{thm}

The above inequalities yield the following corollary.

\begin{cor}\label{prop:NVRank}
    Let $X$ be a minimal non-very generic intersection in $\L(\B(n,k,\A))$, then 
    \begin{equation*}
        \rank(X)> \ell_X \left(\frac{\abs{\bigcup_{i}S_i}}{k+1}-1\right)\geq \frac{2\abs{\bigcup_{i}S_i}}{k+1}-2.
    \end{equation*}
\end{cor}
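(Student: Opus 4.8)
The corollary is a direct algebraic consequence of the three inequalities packaged in Theorem~\ref{Thm:B}, so my plan is to isolate the one inequality that does the work and then glue on the elementary bound coming from item~(1).

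The plan is to start from item~(2) of Theorem~\ref{Thm:B}, namely $a_X\geq \ell_X\abs{\bigcup_i S_i}/(k+1)$, and combine it with the \emph{strict} upper bound on $a_X$ from item~(3), which reads $a_X<\rank(X)+\ell_X$. Chaining these gives
\begin{equation*}
\ell_X\frac{\abs{\bigcup_i S_i}}{k+1}\leq a_X<\rank(X)+\ell_X,
\end{equation*}
and since the left end is $\leq a_X$ while the right end strictly exceeds $a_X$, the two endpoints satisfy $\ell_X\abs{\bigcup_i S_i}/(k+1)<\rank(X)+\ell_X$. Subtracting $\ell_X$ from both sides and factoring yields exactly
\begin{equation*}
\rank(X)>\ell_X\frac{\abs{\bigcup_i S_i}}{k+1}-\ell_X=\ell_X\left(\frac{\abs{\bigcup_i S_i}}{k+1}-1\right),
\end{equation*}
which is the first inequality of the corollary. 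The only subtlety here is that the strictness must come from item~(3): if one used a non-strict upper bound the conclusion would only be $\geq$, so I would be careful to cite that $a_X<\rank(X)+\ell_X$ is strict.

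For the second inequality I would use item~(1), which guarantees $\ell_X\geq 2$. The function $t\mapsto t\bigl(\abs{\bigcup_i S_i}/(k+1)-1\bigr)$ is increasing in $t$ precisely when its coefficient $\abs{\bigcup_i S_i}/(k+1)-1$ is nonnegative, i.e.\ when $\abs{\bigcup_i S_i}\geq k+1$. This holds for any genuine intersection since every component $D_{S_i}$ has $\abs{S_i}\geq k+1$, so $\abs{\bigcup_i S_i}\geq k+1$; thus replacing $\ell_X$ by its lower bound $2$ can only decrease the expression, giving
\begin{equation*}
\ell_X\left(\frac{\abs{\bigcup_i S_i}}{k+1}-1\right)\geq 2\left(\frac{\abs{\bigcup_i S_i}}{k+1}-1\right)=\frac{2\abs{\bigcup_i S_i}}{k+1}-2,
\end{equation*}
completing the chain.

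I do not anticipate a genuine obstacle, since the statement is purely a deduction from Theorem~\ref{Thm:B}; the one point requiring attention is verifying the sign condition $\abs{\bigcup_i S_i}\geq k+1$ that licenses monotonicity in $\ell_X$. I would record this explicitly (it follows from $\abs{S_i}\geq k+1$ for the canonical components) so that the step from the first inequality to the second is valid even in the boundary case $\abs{\bigcup_i S_i}=k+1$, where the coefficient is zero and both sides simply equal zero.
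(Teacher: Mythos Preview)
Your argument is correct and follows essentially the same route as the paper: chain the lower bound $a_X\geq \ell_X\abs{\bigcup_i S_i}/(k+1)$ from item~(2) with the upper bound from item~(3) to squeeze out the strict inequality for $\rank(X)$. The paper phrases the chain slightly differently---it uses the integer form $\rank(X)\geq a_X-\ell_X+1$ and then drops the $+1$ at the last step, rather than invoking the strict inequality $a_X<\rank(X)+\ell_X$ as you do---but since all quantities are integers these are equivalent. Your treatment is in fact more complete than the paper's: the paper's proof only displays the first inequality and leaves the second one implicit, whereas you correctly isolate the monotonicity condition $\abs{\bigcup_i S_i}\geq k+1$ needed to pass from $\ell_X$ to $2$, and justify it via $\abs{S_i}\geq k+1$.
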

\begin{proof}
    By the theorems, it follows that
    \begin{align*}
        \rank(X)\geq a_X-\ell_X+1\geq \frac{\ell_X\abs{{\bigcup_{i}S_i}}}{k+1}-\ell_X+1>\ell_X \left(\frac{\abs{{\bigcup_{i}S_i}}}{k+1}-1\right).
    \end{align*}
\end{proof}

\subsection{Lower bound for Athanasiadis rank of minimal non-very generic intersections}
For an intersection $X=\bigcap_{i=1}^mD_{S_i}\in \L(\B(n,k,\A))$, we define 
\begin{equation*}
    \ell_X=\mathrm{min}_{j\in \bigcup_{i}S_i}\abs{\{i\in [m]\mid j\in S_i\}}.
\end{equation*}

\begin{prop}\label{prop:AthNum}
Let $X=\bigcap_{i=1}^m D_{S_i},\abs{\bigcup_{i=1}^mS_i}=n$ be an intersection in $\L(\B(n,k,\A))$ then the following inequalities hold:
\begin{align*}
a_X&\geq m,\\
a_X&\geq n\ell_X-mk.
\end{align*}
\end{prop}
\begin{proof}
Let $\mathbb{T}=\{S_1,\ldots,S_m\}$ be the canonical presentation of $X$
then we have $a_X=\left(\sum_{S\in \mathbb{T}}\abs{S}\right)-mk$.
By counting the ordered pairs $(i,S)$ such that $i\in S$ in two ways, we obtain
\begin{equation*}
a_X+mk
=\sum_{S\in\mathbb{T}}\abs{S}
=\abs{\{(i,S)\in [n]\times \mathbb{T}\mid i\in S\}}
=\sum_{j=1}^n\abs{\{S\in\mathbb{T}\mid j\in S\}}
\geq n\ell_X.
\end{equation*}
\end{proof}

\begin{lem}\label{prop:Bound_ell}
    Let $X=\bigcap_{i=1}^m D_{S_i}$ be a minimal non-very generic intersection in $\L(\B(n,k,\A))$ then $\ell_X\geq 2$.
\end{lem}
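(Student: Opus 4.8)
The plan is to argue by contradiction: suppose $\ell_X=1$, so some index $j\in\bigcup_iS_i$ lies in exactly one component, say $j\in S_1$ and $j\notin S_i$ for $i\ge 2$. Since a single component is always very generic ($\rank D_{S_1}=|S_1|-k=a_{D_{S_1}}$), the non-very genericity of $X$ forces $m\ge 2$. I would pass to the dual space $(\mathbb{R}^n)^*$ and write $V_S\subseteq(\mathbb{R}^n)^*$ for the span of the defining linear forms of $D_S$, so that $\dim V_S=|S|-k$, $V_X=\sum_{i=1}^mV_{S_i}$, $\rank(X)=\dim V_X$, and the defect $a_X-\rank(X)=\sum_i\dim V_{S_i}-\dim\sum_iV_{S_i}$ is positive precisely because $X$ is non-very generic. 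The decisive structural remark is that, by genericity of $\A$, the coordinate $t_j$ occurs with nonzero coefficient only in forms coming from $V_{S_1}$; moreover, since the functional ``coefficient of $t_j$'' is nonzero on $V_{S_1}$ but vanishes on the codimension-one subspace $V_{\tilde S_1}$ (where $\tilde S_1:=S_1\setminus\{j\}$), the subspace of $t_j$-free forms inside $V_{S_1}$ is exactly $V_{\tilde S_1}$.

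Next I would remove the $t_j$-dependence. Set $Y:=D_{\tilde S_1}\cap\bigcap_{i\ge 2}D_{S_i}$, with the convention $D_{\tilde S_1}=\mathbb{S}$ when $|\tilde S_1|=k$. Then $V_Y$ is spanned by $t_j$-free forms, so $Y$ is a cylinder in the $t_j$-direction; as the forms passing from $D_{\tilde S_1}$ to $D_{S_1}$ contribute exactly one form involving $t_j$, transversality to the $t_j$-free $V_Y$ yields $Y\supsetneq X$ and $\rank(X)=\rank(Y)+1$. Writing $\mathcal P=\{\tilde S_1,S_2,\ldots,S_m\}$ (omitting $\tilde S_1$ if $|\tilde S_1|=k$), one checks in both cases that $\sum_{T\in\mathcal P}(|T|-k)=a_X-1$, whence the defect of $Y$ relative to $\mathcal P$ equals $(a_X-1)-(\rank(X)-1)=a_X-\rank(X)>0$. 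Equivalently, the syzygy witnessing non-very genericity of $X$ has vanishing $t_j$-component, so its $S_1$-part lies in $V_{\tilde S_1}$ and it descends to a nontrivial syzygy for $\mathcal P$.

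The main obstacle is to upgrade this $\mathcal P$-defect into a genuine (Athanasiadis) defect, i.e.\ to prove that $\mathcal P$ is the \emph{canonical} presentation of $Y$, since only then does defect $>0$ mean that $Y$ is non-very generic. I would show that every component $D_U$ of $Y$ is $t_j$-free and satisfies $U\subseteq S_l$ for some component $S_l$ of $X$ (because $Y\subseteq D_U$ implies $X\subseteq D_U$), that each $S_l$ with $l\ge 2$ stays maximal for $Y$ (an extension $Y\subseteq D_{S_l\cup\{j'\}}$ would force $X\subseteq D_{S_l\cup\{j'\}}$, contradicting that $S_l$ is a component of $X$; the case $j'=j$ is excluded by the cylinder structure), and that $\tilde S_1$ is itself a component when $|\tilde S_1|\ge k+1$. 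The one dangerous possibility is that $\tilde S_1$ is instead absorbed into some $S_{i_0}$ ($i_0\ge 2$), i.e.\ $\tilde S_1\subseteq S_{i_0}$; here the linear-algebraic remark is essential. Since $V_{S_{i_0}}$ is $t_j$-free and the $t_j$-free part of $V_{S_1}$ is $V_{\tilde S_1}$, one gets $V_{S_1}\cap V_{S_{i_0}}=V_{\tilde S_1}$, so $\dim(V_{S_1}+V_{S_{i_0}})=(|S_1|-k)+(|S_{i_0}|-k)-(|\tilde S_1|-k)=|S_{i_0}\cup\{j\}|-k$ and therefore $V_{S_1}+V_{S_{i_0}}=V_{S_{i_0}\cup\{j\}}$. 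This yields $X\subseteq D_{S_{i_0}\cup\{j\}}$, contradicting the maximality of the component $S_{i_0}$ of $X$; hence no such absorption occurs and $\mathcal P$ is canonical.

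With $\mathcal P$ canonical we obtain $a_Y=\sum_{T\in\mathcal P}(|T|-k)>\rank(Y)$, so $Y$ is a non-very generic intersection with $Y\supsetneq X$, contradicting the minimality of $X$. Therefore $\ell_X\ge 2$. I expect the bookkeeping in the last two paragraphs — isolating the $t_j$-free part of $V_{S_1}$ and ruling out the absorption $\tilde S_1\subseteq S_{i_0}$ — to be the crux; the rank count and the transfer of the defect are routine once the cylinder structure is in place.
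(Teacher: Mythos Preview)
Your proof is correct and follows the same strategy as the paper's: assume $\ell_X=1$, delete the index $j$ appearing in a single component to form $Y=D_{\tilde S_1}\cap\bigcap_{i\ge 2}D_{S_i}\supsetneq X$ with $\rank(Y)=\rank(X)-1$, and contradict minimality. Your argument is in fact more thorough than the paper's, which asserts $a_Y=a_X-1$ without checking that $\{\tilde S_1,S_2,\ldots,S_m\}$ is the canonical presentation of $Y$; your handling of the absorption case $\tilde S_1\subseteq S_{i_0}$ (via $V_{S_1}+V_{S_{i_0}}=V_{S_{i_0}\cup\{j\}}$ and the resulting contradiction with maximality of $S_{i_0}$) closes that gap.
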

\begin{proof}
    Assume that $\ell_X=1$ , i.e., there is an index ${j_0}\in\bigcup_{i=1}^m S_i$ such that $\{j\in S_i\mid i\in[m], j\in S_i\}=\{S_{i_0}\}$.
Hence we can define the family of indices $\mathbb{T}=\{S_1,\ldots,S_m\}\setminus\{S_{i_0}\}\cup\{S_{i_0}\setminus\{j_0\}\}$ and the intersection $Y=\bigcap_{S\in \mathbb{T}}D_S$.
    By the definition of Athanasiadis rank we get $a_Y=a_X-1$.\\
On the other hand, the orthogonal complement $X^\perp=(\bigcap_{i=1}^m D_{S_i})^\perp\subset\mathbb{S}$ is spanned by $\{\alpha_L\in \mathbb{S}\mid L\subset S_i(i\in [m])\}$ where $L=\{i_1,\ldots,i_{k+1}\}\in\binom{[n]}{k+1}$ and $\alpha_{L}=\det\begin{pmatrix}\mathbf{e}_{{i_1}}&\cdots& \mathbf{e}_{{i_{k+1}}}\\\alpha_{{i_1}}&\cdots&\alpha_{{i_{k+1}}}\end{pmatrix}$.
    When we remove the hyperplane $H_{j_0}$, one coordinate disappears, hence the dimension goes down by one.
    Thus $\rank(Y)=\rank(\bigcap_{S\in \mathbb{T}}D_S)=\rank(X)-1$.
    Therefore $Y$ is still non-very generic and this contradicts the fact that $X$ is a minimal non-very generic intersection.
\end{proof}


\begin{thm}\label{prop:LowerBoundOfAthNum}
    Let $X=\bigcap_{i=1}^m D_{S_i}$ be a minimal non-very intersection in $\L(\B(n,k,\A))$ then $a_X\geq \frac{2\abs{\bigcup_{i=1}^mS_i}}{(k+1)}$.
\end{thm}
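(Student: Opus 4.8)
The plan is to eliminate the number of components $m$ between the two counting bounds of Proposition~\ref{prop:AthNum} and the estimate $\ell_X\geq 2$ supplied by Lemma~\ref{prop:Bound_ell}. Set $N=\abs{\bigcup_{i=1}^m S_i}$. The first thing I would note is that the double-counting argument behind Proposition~\ref{prop:AthNum} never uses that every hyperplane of $\A$ occurs among the $S_i$: counting the incidence pairs $(j,S_i)$ with $j\in S_i$ in two ways yields
\[
a_X+mk=\sum_{i=1}^m\abs{S_i}=\sum_{j\in\bigcup_i S_i}\abs{\{i\mid j\in S_i\}}\geq N\ell_X,
\]
so the bound $a_X\geq N\ell_X-mk$ holds with $N$ in place of the ambient $n$. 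The same proposition also gives $a_X\geq m$, because each $\abs{S_i}\geq k+1$ forces $\abs{S_i}-k\geq 1$.

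Next I would use minimality. Since $X$ is a minimal non-very generic intersection, Lemma~\ref{prop:Bound_ell} gives $\ell_X\geq 2$, and substituting this into the first bound produces $a_X\geq 2N-mk$, that is, $a_X+mk\geq 2N$.

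The concluding step is to remove $m$ using $a_X\geq m$: multiplying by $k$ gives $mk\leq k\,a_X$, whence
\[
a_X(k+1)=a_X+k\,a_X\geq a_X+mk\geq 2N,
\]
and rearranging yields $a_X\geq \dfrac{2N}{k+1}=\dfrac{2\abs{\bigcup_{i=1}^m S_i}}{k+1}$, as desired.

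I do not expect a genuine obstacle in this argument; it is a clean linear elimination of $m$ from two inequalities. The only point deserving attention is the observation that the counting bounds of Proposition~\ref{prop:AthNum} persist with $N=\abs{\bigcup_i S_i}$ rather than $n$, since the statement here does not assume that all $n$ hyperplanes appear among the components; this is transparent from the double-counting display, and minimality enters only through $\ell_X\geq 2$.
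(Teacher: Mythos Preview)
Your proof is correct and follows exactly the same route as the paper: combine the two inequalities of Proposition~\ref{prop:AthNum} with $\ell_X\geq 2$ from Lemma~\ref{prop:Bound_ell}, then eliminate $m$ via $mk\leq k\,a_X$ to obtain $a_X(k+1)\geq 2\abs{\bigcup_i S_i}$. Your remark that the double-counting works with $N=\abs{\bigcup_i S_i}$ rather than the ambient $n$ is a worthwhile clarification, since Proposition~\ref{prop:AthNum} is stated under the hypothesis $\abs{\bigcup_i S_i}=n$.
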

\begin{proof}
    By Proposition \ref{prop:AthNum} and Lemma \ref{prop:Bound_ell}, 
    \begin{equation*}
        a_X\geq \ell_X\abs{\bigcup_{i=1}^m S_i}-mk\geq 2\abs{\bigcup_{i=1}^m S_i}-a_Xk.
    \end{equation*}
\end{proof}

\begin{defi}
A minimal non-very generic intersection $X=\bigcap_{i=1}^mD_{S_i}$ in $\L(\B(n,k,\A))$ is \textbf{sparse} if \\ $a_X= \frac{2\abs{\bigcup_{i=1}^mS_i}}{k+1}$.
\end{defi}

\begin{ex}
Given $p\in\mathbb{R}\setminus\{0\}$ and an integer $m\geq 3$, we consider the hyperplane $$H_0 \coloneqq\{(x,y,z)\in\mathbb{R}^3\mid z=px\}$$ and the matrix
    \begin{equation*}
        g\coloneqq\begin{pmatrix}\cos(\pi/m)&-\sin(\pi/m)&0\\\sin(\pi/m)&\cos(\pi/m)&0\\0&0&1\end{pmatrix}
    \end{equation*}
which define a generic arrangement $\A_{2m}:=\{H_i\subset\mathbb{R}^3\mid H_i=g^iH_0\,(i=1,\cdots 2m)\}$.\\
Consider the family of subsets $S_i:=\{i,i+1,i+m,i+m+1\}$ for $i=1,\dots,m$ where each element in $S_i$ is taken modulo $2m$. Then the Athanasiadis rank $a_{\bigcap_{i=1}^m D_{S_i}}$ of the intersection $\bigcap_{i=1}^m D_{S_i}\in\L(\B(2m,3,\A_{2m}))$ is $a_{\bigcap_{i=1}^m D_{S_i}}=2\abs{\bigcup_{i=1}^mS_i}/(k+1)=2\times2m/(3+1)=m$. For the vectors $\alpha_{S_i},i\in[m]$ normal to the hyperplanes $H_i\in\A_{2m}$, we can check that the determinants 
$$\det(\alpha_{i+1}\alpha_{i+m}\alpha_{i+m+1})=\det(\alpha_{i}\alpha_{i+m}\alpha_{i+m+1})=\det(\alpha_{i}\alpha_{i+1}\alpha_{i+m+1})=\det(\alpha_{i}\alpha_{i+1}\alpha_{i+m})$$ are constant for $i\in[m]$. Hence $\alpha_{S_i}, i\in[m]$ are linearly dependent, that is the $\rank \bigcap_{i=1}^m D_{S_i} <m=a_{\bigcap_{i=1}^m D_{S_i}}$ i.e.  $\bigcap_{i=1}^m D_{S_i}$ is non-very generic. In particular the intersection $\bigcap_{i=1}^m D_{S_i}$ is sparse.
\end{ex}
\noindent
As seen in the above example, when $k = 3$, sparse intersections exist for almost all even numbers $\abs{\bigcup_{i=1}^mS_i}$. Another way to construct sparse intersections for any $k$ is to use Settepanella and Yamagata's $K_\mathbb{T}$-configuration in \cite{SeSo}.
But their way of construction implies the condition $n\leq k(k+1)/2$.
Thus sparse intersections for $k\ll n$ cannot be obtained in this way.
One question that arises is: for which $(n, k)$ values does a sparse intersection exist?

\subsection{Upper bound for Athanasiadis rank of minimal non-very generic intersections}
\begin{lem}\label{prop:UpperBoundOfell}
   Let $X$ be a minimal non-very generic intersection in $\L(\B(n,k,\A))$ then $\ell_X\leq k$.
\end{lem}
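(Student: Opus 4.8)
The plan is to pass to the orthogonal complement $X^\perp\subseteq\mathbb S$ and recast everything as a statement about relation spaces. For $S\subseteq[n]$ I set $R_S:=\{v\in\mathbb R^S\mid\sum_{t\in S}v_t\alpha_t=0\}$, the space of linear relations among $\{\alpha_t\}_{t\in S}$; as recalled in the proof of Lemma \ref{prop:Bound_ell}, $R_S$ is spanned by the vectors $\alpha_L$ with $L\subseteq S$, $\abs{L}=k+1$, so $\dim R_S=\abs{S}-k$ and $X^\perp=\sum_{i=1}^m R_{S_i}$. Consequently $\rank(X)=\dim\sum_i R_{S_i}$ while $a_X=\sum_i\dim R_{S_i}$, and since each $\abs{S_i}\ge k+1$ one has $a_X\ge m$ (Proposition \ref{prop:AthNum}). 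Writing $n'=\abs{\bigcup_i S_i}$, all the $R_{S_i}$ lie in the relation space of the $n'$ generic vectors $\{\alpha_t\}_{t\in\bigcup_iS_i}$, whence $\rank(X)\le n'-k$.

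I would argue by contradiction, assuming $\ell_X\ge k+1$. The first, purely combinatorial, step is a double count of incidences. Since every $t\in\bigcup_iS_i$ lies in at least $\ell_X\ge k+1$ of the sets $S_i$,
\begin{equation*}
a_X+mk=\sum_{i=1}^m\abs{S_i}=\sum_{t\in\bigcup_iS_i}\abs{\{i\mid t\in S_i\}}\ge(k+1)\,n'.
\end{equation*}
Combining this with $m\le a_X$ gives $a_X(k+1)\ge a_X+mk\ge(k+1)n'$, i.e. $a_X\ge n'$, and therefore the defect satisfies
\begin{equation*}
a_X-\rank(X)\ge n'-(n'-k)=k.
\end{equation*}
This half uses nothing about minimality and is the easy part of the argument.

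The second step is to exploit minimality to bound the defect from above and so contradict $a_X-\rank(X)\ge k\ge 2$. The statement I would establish is that for minimal $X$ the subspaces $R_{S_1},\dots,R_{S_m}$ form a \emph{circuit}, i.e. a minimally dependent family, so that $a_X-\rank(X)=1$. The mechanism mirrors Lemma \ref{prop:Bound_ell}: if some proper subfamily $\{R_{S_i}\}_{i\in J}$, $J\subsetneq[m]$, were already dependent, then $X_J=\bigcap_{i\in J}D_{S_i}\supseteq X$ would be non-very generic; if $X_J\supsetneq X$ this contradicts minimality, forcing $X_J=X$. In the same spirit, if the defect were $\ge2$ I would delete a single index from a component to obtain a rank-dropping shrink $Y\supsetneq X$ with $a_Y=a_X-1$ and $\rank(Y)=\rank(X)-1$, hence $a_Y-\rank(Y)=a_X-\rank(X)\ge1$, so that $Y$ is still non-very generic, again contradicting minimality. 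This would force $a_X-\rank(X)=1$, and together with the first step yield $k\le1$, absurd for $k\ge2$; hence $\ell_X\le k$.

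The hard part will be exactly this reduction to the circuit case, equivalently the identity $a_X=\rank(X)+1$ for minimal $X$. The clean ``remove a component / shrink an index'' operations give an immediate contradiction \emph{whenever} the deletion strictly enlarges $X$, but they say nothing when the deleted component is redundant, i.e. when $R_{S_{i_0}}\subseteq\sum_{i\ne i_0}R_{S_i}$ while $S_{i_0}$ is still a genuine (maximal) component of the canonical presentation; the same gap appears as the need to guarantee that a rank-dropping shrink actually exists when the defect exceeds $1$. Ruling out such redundant components -- equivalently, showing that the linear dependence among the $R_{S_i}$ cannot be concentrated on a proper subfamily of the canonical presentation -- is where the real work lies. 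Once $a_X=\rank(X)+1$ is secured, the incidence count of the second paragraph closes the argument in a single line.
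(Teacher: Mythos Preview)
Your Step~1 is correct and parallels the paper's use of Proposition~\ref{prop:AthNum}: the double count $a_X+mk\ge(k+1)n'$ together with $m\le a_X$ indeed gives $a_X\ge n'$, and since $\rank(X)\le n'-k$ one obtains $a_X-\rank(X)\ge k$ under the hypothesis $\ell_X\ge k+1$.

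Step~2, however, cannot be completed as proposed. The target identity $a_X=\rank(X)+1$ for minimal $X$ is \emph{false} in general: the dense intersections constructed later in the paper satisfy $a_X=\rank(X)+k-1$, so the defect can be any value up to $k-1$. The circuit reduction you outline is therefore not merely incomplete but aimed at a statement the paper itself refutes. You correctly flag the difficulty with redundant components, but there is no true statement behind that gap to close. Moreover, the only general upper bound available on the defect is $a_X-\rank(X)\le\ell_X-1$ (Theorem~\ref{prop:UpperBoundOfAthNumb}); under the assumption $\ell_X\ge k+1$ this yields only $k\le\ell_X-1$, which is the hypothesis again and not a contradiction.

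The paper's argument uses a different deletion. Instead of shrinking one component by one index, it removes an index $j$ realizing $\ell_X$ from \emph{all} components at once, producing $X\setminus H_j$ with $a_{X\setminus H_j}=a_X-\ell_X$. Minimality forces $X\setminus H_j$ to be very generic, so $a_{X\setminus H_j}<(n-1)-k$, i.e.\ $a_X<n+\ell_X-k-1$. Playing this against both inequalities of Proposition~\ref{prop:AthNum} (not only $a_X\ge m$ but also $a_X\ge n\ell_X-mk$) yields $(n-k-1)(\ell_X-k-1)<0$, hence $n\le k$, absurd. The operation you are missing is precisely this simultaneous removal of $j$ from every $S_i$, which drops the Athanasiadis rank by $\ell_X$ in one step rather than by $1$.
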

\begin{proof}
 Without loss of generality, consider $n=\abs{\bigcup_{i=1}^mS_i}$  and assume $\ell_X > k$.
 Fix a hyperplane $H_j$ such that $\abs{\{i\in[m]\mid j\in S_i\}}=\ell_X$ then $X\setminus H_j$ is the  intersection $\left(\bigcap_{i\in[m],j\notin S_i}D_{S_i}\right)\cap\left(\bigcap_{i\in[m],j\in S_i}D_{S_i\setminus j}\right)$. Since $X$ is minimal, the inequality $(n-1)-k>a_{X\setminus H_j}=a_X-\ell_X$ holds, i.e., equivalently
    \begin{equation*}
    	a_X<n+\ell_X-k-1.
    \end{equation*}
    The above condition and the inequalities of Proposition \ref{prop:AthNum} yield
    \begin{align*}
    	m&<n+\ell_X-k-1,\\
    	n\ell_X-km&<n+\ell_X-k-1.
    \end{align*}
    Furthermore, the above two inequalities are equivalent to inequalities  $$(n\ell_X-n-\ell_X+k+1)/k<m<n+\ell_X-k-1$$ which imply $$(n\ell_X-n-\ell_X+k+1)-k(n+\ell_X-k-1)=(n-k-1)(\ell_X-k-1)<0 \quad .$$
 Since $\ell_X\geq k+1$, it follows that $n-k-1<0$, i.e., $n \leq k$ and we get an absurd since all generic arrangements consisting of $k$ hyperplanes in a $k$-dimensional space are very generic. Hence $\ell_X\leq k$.
\end{proof}

\begin{thm}\label{prop:UpperBoundOfAthNumb}
    Let $X$ be a minimal non-very generic intersection in $\L(\B(n,k,\A))$ then $a_X\leq \rank (X)+\ell_X-1$.
    In particular, $\rank(X)<a_X\leq \rank(X)+k-1$.
\end{thm}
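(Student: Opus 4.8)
The plan is to prove the central inequality $a_X\le\rank(X)+\ell_X-1$ by removing a single, maximally shared hyperplane and exploiting minimality, exactly in the spirit of Lemma~\ref{prop:Bound_ell} and Lemma~\ref{prop:UpperBoundOfell}; the two refinements in the ``in particular'' clause then follow formally. Since hyperplanes whose index lies outside $\bigcup_i S_i$ play no role, I would first reduce to the case $n=\abs{\bigcup_{i=1}^m S_i}$. Next I fix an index $j\in\bigcup_i S_i$ realizing the minimum, so that $j$ belongs to exactly $\ell_X$ of the sets $S_i$, and form the removed intersection
\begin{equation*}
Y:=\Big(\bigcap_{i:\,j\notin S_i}D_{S_i}\Big)\cap\Big(\bigcap_{i:\,j\in S_i}D_{S_i\setminus j}\Big)\in\L(\B(n,k,\A)),
\end{equation*}
together with its avatar $X'=X\setminus H_j\in\L(\B(n-1,k,\A'))$ in the arrangement $\A'$ obtained by deleting $H_j$. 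As in the proofs of Lemma~\ref{prop:Bound_ell} and Lemma~\ref{prop:UpperBoundOfell}, deleting $j$ lowers the Athanasiadis rank by one in each of the $\ell_X$ sets containing it, so $a_{X'}=a_X-\ell_X$.

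The heart of the argument is to compare the ranks of $X$ and $Y$. Writing $X^\perp=\mathrm{span}\{\alpha_L\mid L\subseteq S_i\ \text{for some }i\}$ and, likewise, $Y^\perp=\mathrm{span}\{\alpha_L\mid L\subseteq S_i\ \text{for some }i,\ j\notin L\}$, one gets $Y^\perp\subseteq X^\perp$ and hence $X\subseteq Y$. I claim the inclusion is strict. Indeed, since $\ell_X\ge2$ there is some $S_i\ni j$, and choosing $L_0\subseteq S_i$ with $j\in L_0$ and $\abs{L_0}=k+1$, the $j$-th coordinate of $\alpha_{L_0}$ is, up to sign, the determinant of the normals $\alpha_l$ with $l\in L_0\setminus j$, which is nonzero because $\A$ is generic. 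As every generator of $Y^\perp$ vanishes in the $j$-th coordinate, $\alpha_{L_0}\in X^\perp\setminus Y^\perp$, so $Y^\perp\subsetneq X^\perp$ and therefore $\rank(Y)\le\rank(X)-1$. In particular $X\subsetneq Y$, so minimality of $X$ forces $Y$ to be very generic; and since $t_j$ is free on $Y$ (that is, the $j$-th coordinate direction lies in $Y$), no component of $Y$ contains $j$, so the components of $Y$ coincide with those of $X'$ and $X'$ is very generic as well, giving $\rank(X')=\rank(Y)=a_{X'}=a_X-\ell_X$.

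Combining the two displays yields $\rank(X)\ge\rank(Y)+1=a_X-\ell_X+1$, i.e. $a_X\le\rank(X)+\ell_X-1$. The refined statement then follows at once: $a_X\le\rank(X)+k-1$ by Lemma~\ref{prop:UpperBoundOfell} ($\ell_X\le k$), while $\rank(X)<a_X$ is immediate from $X$ being non-very generic together with the general bound $\rank(X)\le a_X$. The step I expect to require the most care is the transfer of very-genericity from $Y$ to $X'$ — concretely, checking that the components of $Y$ avoid $j$ and that the natural presentation of $X'$ computes $a_{X'}=a_X-\ell_X$ without any rank-reducing redundancy — together with making the strictness $\rank(Y)<\rank(X)$ rigorous; the genericity of $\A$ is exactly what makes the deleted $j$-coordinate nonzero and drives this last point.
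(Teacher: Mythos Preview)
Your approach is essentially the same as the paper's: pick an index $j$ realizing $\ell_X$, form the ``deletion'' $X\setminus H_j$, use minimality to conclude that this larger intersection is very generic, and then compare ranks. The paper's proof is in fact much terser than yours (a single sentence, with what appear to be typos: it writes ``$=2$'' where ``$=\ell_X$'' is clearly intended, and it records only the factors $D_{S_i}$ with $j\notin S_i$ although the computation $a_{X\setminus H_j}=a_X-\ell_X$ needs the full presentation you wrote). Your argument for the strict inclusion $Y^\perp\subsetneq X^\perp$ via the nonzero $j$-th coordinate of $\alpha_{L_0}$ is exactly the mechanism behind the paper's one-line assertion $\rank(X)\ge\rank(X\setminus H_j)+1$, spelled out properly.

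The point you flag as requiring care --- that the natural presentation $\{S_i:j\notin S_i\}\cup\{S_i\setminus j:j\in S_i,\,|S_i|\ge k+2\}$ of $Y$ is canonical, so that $a_Y=a_X-\ell_X$ --- is genuinely the only subtle step, and the paper glosses over it just as you do. The inequality $\rank(Y)\le a_X-\ell_X$ follows from subadditivity of rank over any presentation; what you actually need for the chain $\rank(X)\ge\rank(Y)+1=a_X-\ell_X+1$ is the reverse inequality, and that is where canonicity (hence very-genericity giving $\rank(Y)=a_Y$) is used. Your observation that $e_j\in Y$ forces every component of $Y$ to avoid $j$, combined with the maximality of the $S_i$ in the canonical presentation of $X$, is the right lever to pin this down; neither you nor the paper writes out the verification in full, but you have correctly located where the work lies.
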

\begin{proof}
    Consider a hyperplane $H_j$ such that $\abs{\{i\in[m]\mid j\in S_i\}}=2$ then $X\setminus H_j$ is the intersection $\bigcap_{i\in[m],j\notin S_i}D_{S_i}$ and we have that $\rank (X)\geq \rank(X\setminus H_j)+1=a_{X\setminus H_j}+1=a_X-\ell_X+1$.
\end{proof}

\noindent
In the following, we will show that the inequalities in Theorem \ref{prop:UpperBoundOfAthNumb} are sharp.
\begin{defi}
A minimal non-very generic  intersection $X=\bigcap_{i=1}^mD_{S_i}$ in $\L(\B(n,k,\A))$  is \textbf{dense} if\\ $a_X=\rank X+k-1$.
\end{defi}

\begin{thm}
For any $k\geq 2$, there exists a non-very generic arrangement $\A$ consisting of a sufficiently large number of hyperplanes in $\mathbb{R}^k$ (or $\mathbb{C}^k$) such that $\L(\B(n,k,\A))$ contains a dense intersection.
\end{thm}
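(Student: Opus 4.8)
The plan is to translate \emph{density} into a precise arithmetic target and then realize it by an explicit family. Since $X$ is minimal non-very generic, Theorem~\ref{prop:UpperBoundOfAthNumb} gives $\rank(X)<a_X\le \rank(X)+k-1$, with equality on the right forcing $\ell_X=k$. Hence being dense is equivalent to the two conditions $\ell_X=k$ and $a_X-\rank(X)=k-1$; that is, the collection of normals $\{\alpha_L\mid L\subset S_i,\ \abs{L}=k+1\}$ spanning $X^\perp$ must satisfy exactly $k-1$ independent relations beyond the transversal count. So it suffices to exhibit, for each $k\ge 2$ and some large $n$, a generic arrangement $\A$ together with a canonical presentation $\{S_1,\dots,S_m\}$ of a non-very generic $X\in\L(\B(n,k,\A))$ such that every index of $\bigcup_i S_i$ lies in at least (and minimally) $k$ of the $S_i$, the span of the $\alpha_L$ has corank precisely $k-1$, and $X$ is minimal.

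Before choosing $\A$, I would first extract the structural shape that minimality imposes, to avoid a dead end. For a component index $i_0$ set $Y=\bigcap_{i\ne i_0}D_{S_i}\supsetneq X$; minimality makes $Y$ very generic, so $\rank(Y)=a_Y$, and a short computation — valid whenever deleting a component does not merge the remaining ones, which I will arrange in the construction — gives $\rank(Y)=a_X-(\abs{S_{i_0}}-k)$. Since $\rank(X)\ge \rank(Y)$ and $\rank(X)=a_X-(k-1)$, this yields $\abs{S_{i_0}}\ge 2k-1$ for every component. In particular, for $k\ge 3$ one cannot use only $(k+1)$-subsets: such a configuration would consist of single vectors $\alpha_{S_i}$ with a $(k-1)$-dimensional, hence reducible, relation space, producing a proper sub-circuit and a smaller non-very generic $Y\supsetneq X$, contradicting minimality. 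This is consistent with Crapo's example, where $k=2$ and $\abs{S_i}=3=2k-1$, and it tells me the construction must use \emph{large} components carrying a single, globally irreducible corank-$(k-1)$ degeneracy that disappears as soon as any component is deleted or shrunk.

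For the construction I would realize this shape by a highly symmetric arrangement, generalizing both Crapo's configuration and the rotational Example of Section~\ref{sec:MNG}: take $\A$ to be the orbit of a single hyperplane under a suitable finite cyclic subgroup of $SO(k)$ (generic in the initial hyperplane, so that $\A$ is generic), and let the $S_i$ be the cyclic translates of one block of $2k-1$ indices, chosen so that each index lies in exactly $k$ of them. The rotational symmetry forces the relevant minors $\det(\alpha_{L\setminus t})$ to be constant along the orbit, exactly as in the determinant computation of the sparse Example, so that the $\alpha_L$ are permuted up to scalars; the corank of their span is then governed by the characters absent from the orbit and can be tuned to $k-1$ by choosing the group order and the block length. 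Alternatively, Falk's adjoint construction (see \cite{Falk}) can prescribe the dependency directly while guaranteeing a generic realization of the intended combinatorics. I would then verify minimality by checking that deleting any component, or removing any single index from a component, restores transversality of the corresponding $\alpha_L$.

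The main obstacle is the \emph{exact} corank together with minimality, because the two requirements pull in opposite directions: a large relation space makes non-very-genericity easy but tends to create small circuits violating minimality, whereas the constraint $\abs{S_i}\ge 2k-1$ forces the components to be large precisely so that the unique corank-$(k-1)$ degeneracy is irreducible. Pinning the corank to be neither smaller (which would make $X$ very generic) nor larger (which would destroy either minimality or the equality $\ell_X=k$), and then proving that every proper super-intersection is transversal, is the technical heart; concretely it reduces to an explicit rank computation for the chosen symmetric (or adjoint) family, generalizing the single-determinant identity used in the rank-two and sparse examples.
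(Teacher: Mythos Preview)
Your structural analysis contains a real error that derails the whole plan. You assert that for any component index $i_0$ the intersection $Y=\bigcap_{i\ne i_0}D_{S_i}$ is \emph{strictly} larger than $X$; this is exactly what fails. When all components are hyperplanes (i.e.\ $\abs{S_i}=k+1$), the normals $\alpha_{S_i}$ are $m=a_X$ vectors spanning the $(\rank X)$-dimensional space $X^\perp$; for a dense $X$ one has $m-\rank X=k-1\ge 1$, so deleting any single $\alpha_{S_{i_0}}$ generically leaves the span unchanged and hence $Y=X$. Your follow-up ``sub-circuit'' argument has the same flaw: a relation space of dimension $k-1$ guarantees circuits, but the smallest circuits have size $\rank(X)+1$, and for such a circuit $I$ the intersection $\bigcap_{i\in I}D_{S_i}$ again has rank $\rank(X)$, i.e.\ equals $X$ rather than a proper super-intersection. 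Only a circuit of size $\le\rank(X)$ would contradict minimality, and that can be avoided by a genericity (Zariski-open) choice. So the conclusion ``for $k\ge 3$ one cannot use only $(k+1)$-subsets'' is false, and the large-block cyclic construction you aim for is neither forced nor carried out.

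The paper proceeds in precisely the direction you discarded. It takes $n=2k+2$ hyperplanes indexed by $[\pm(k+1)]$ and the family $\mathbb{U}_k=\{K_{k,i}:i\in[\pm k]\}$ of $2k$ subsets of size $k+1$, where $K_{k,i}=([k+1]\setminus\{i\})\cup\{-i\}$ and $K_{k,-i}=[\pm(k+1)]\setminus K_{k,i}$; this gives $a_X=2k$. A direct combinatorial check (their Fact~2) shows every proper $\mathbb{T}\subset\mathbb{U}_k$ with $2\le\abs{\mathbb{T}}\le k+1$ satisfies the BBA inequality, and Fact~1 shows any $k+1$ of the $K_{k,i}$ already cover $[\pm(k+1)]$. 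Realization is via the Falk adjoint: pick points $v_j\in\mathbb{R}^k$ for $j\in[\pm k]$ and let $H'_i$ be the affine span of $\{v_j:i\in K_{k,j}\}$; for $v_j$ outside a Zariski-closed locus the arrangement is generic, $X=\bigcap_{L\in\mathbb{U}_k}D_L$ has canonical presentation $\mathbb{U}_k$, and every $\mathbb{T}$ with $\abs{\mathbb{T}}\le k+1$ is transversal, forcing minimality with $\rank X=k+1$. Hence $a_X=2k=\rank X+k-1$ and $X$ is dense. The work is the combinatorial verification of Fact~2 and the Zariski argument, not any character/corank tuning.
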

\begin{proof}
Let $k$ be a positive integer and denote by $[\pm k]$ the set $\{-k,\dots,-1,1,\dots,k\}\subset \mathbb{Z}$. Consider an arrangement $\mathcal{A}$ of $n=2k+2$ hyperplanes indexed by $[\pm (k+1)]$ and set $\nu(\mathbb{T})=\sum_{S\in\mathbb{T}}(\abs{S}-k)$, $\mathbb{T}\subset 2^{[\pm k]}$. Note that, if $\mathbb{T}$ is the canonical presentation of an intersection $X\in\mathcal{L}(\mathcal{B}(n,k,\mathcal{A}))$ then $\nu(\mathbb{T})=a_X$.
When $k=2$, Crapo's example satisfies the equation $a_X=\rank X+k-1$ and hence Crapo's arrangement is our required non-very generic arrangement containing a dense intersection. Now we check the case when $k>2$.\\
Define three subsets of $\binom{[\pm(k+1)]}{k+1}$ as follows:
\begin{align*}
\mathbb{U}^+_k=&\{K_{k,i}=[k+1]\setminus\{i\}\cup\{-i\}\mid i\in[k]\},\\
\mathbb{U}^-_k=&\{K_{k,-i}=[\pm(k+1)]\setminus K_i\mid i\in[k]\},\\
\mathbb{U}_k=&\mathbb{U}_k^+\sqcup\mathbb{U}_k^-.
\end{align*}
For instance $\mathbb{U}_2=\{K_{2,1}=\{-1,2,3\},K_{2,2}=\{1,-2,3\},K_{2,-1}=\{1,-2,-3\},K_{2,-2}=\{-1,2,-3\}\}$ corresponds to Crapo's case.\\
By construction $\nu(\mathbb{U}_k)=2k$, hence the goal is to show the existence of an intersection $X$ whose canonical presentation is $\mathbb{U}_k$, i.e. $a_X=2k$ and whose rank is $k+1$, i.e. $a_X=\rank X+k-1$ that is X is dense.\\
For each $i\in[\pm(k+1)]$ the equality $\abs{\{L\in\mathbb{U}_k\mid i\in L\}}=\abs{\{L\in\mathbb{U}_k\mid i\notin L\}}=k$ holds and, if $\mathrm{sgn}:\mathbb{Z}\to \{-1,0,1\}$ denotes the sign function, for each $i,j\in [\pm k]$ we have
\begin{equation*}
\abs{K_{k,i}\cap K_{k,j}}=\left\{\begin{aligned}
&0&&i+j=0;\\
&2&&i+j\neq0\mbox{ and }\mathrm{sgn}(i)=-\mathrm{sgn}(j);\\
&k-1&&\mbox{otherwise} .\\
\end{aligned}\right.
\end{equation*}
Hence the following statement holds: \\
\\
\textbf{Fact 1.} \textit{Let $\mathbb{T}$ be a subset of $\mathbb{U}_k$ of cardinality $k+1$.
Then $\abs{\bigcup_{L\in \mathbb{T}}L}=2k+2$.}\\ 
\\
\noindent
The symmetric group $\mathfrak{S}_k$ of degree $k$ acts naturally on $\{1,\dots, k\}$ and if $\sigma\in\mathfrak{S}_k$ acts on the set $\{i_1,\dots,i_l\}$ as $\sigma(\{i_1,\dots,i_l\})=\{\sigma(i_1),\dots,\sigma(i_l)\}$ then we set $\sigma(-i_j)=-\sigma(i_j), j=1,\ldots,l$. In this way we define an action of $\mathfrak{S}_k$ on $2^{[\pm k]}$ which satisfies
\begin{equation*}
\sigma(K_{k,i})=[k+1]\setminus\{\sigma(i)\}\cup\{-\sigma(i)\}=K_{k,\sigma(i)} \quad .
\end{equation*}
In order to complete the proof, we need to prove the following:\\
\\
\textbf{Fact 2.} For any fixed $k\geq 3$ and $\mathbb{T}\subset\mathbb{U}_k$,
if $2\leq \abs{\mathbb{T}}\leq n-k-1$ then $\abs{\mathbb{T}}+k<\abs{\bigcup_{L\in\mathbb{T}}L}$ and $\mathbb{T}$ satisfies the BBA-condition.\\
\textit{[Proof of Fact 2.]} First notice that since $n=2k+2$ then $n-k-1=k+1$.
We use induction on $k$. When $k=3$, then $\abs{\mathbb{T}}$ is either $2,3$ or $4$. In the first case $\mathbb{T}=\{L_1,L_2\}$, then
\begin{equation*}
	\abs{L_1\cup L_2}=\abs{L_1}+\abs{L_2}-\abs{L_1\cap L_2}\geq2(k+1)-(k-1)=k+3>2+k=\abs{\mathbb{T}}+k.
\end{equation*}
In the second case, $\mathbb{T}=\{L_1,L_2,L_3\}$ and we have either $\abs{L_1\cap L_2\cap L_3}>0$, then
\begin{align*}
	\abs{L_1\cup L_2\cup L_3}=&\abs{L_1}+\abs{L_2}+\abs{L_3}-\abs{L_1\cap L_2}-\abs{L_2\cap L_3}-\abs{L_1\cap L_3}+\abs{L_1\cap L_2\cap L_3}\\
	\geq&3(k+1)-3(k-1)+1=7>k+3=\abs{\mathbb{T}}+k
\end{align*}
or $\abs{L_1\cap L_2\cap L_3}=0$, then there exist $i\in[k]$ and $j\in[\pm k]$ such that $\mathbb{T}=\{K_{k,i},K_{k,-i},K_{k,j}\}$.
Thus we get
\begin{align*}
	&\abs{K_{k,i}\cup K_{k,-i}\cup K_{k,j}}\\
	=&\abs{K_{k,i}}+\abs{K_{k,-i}}+\abs{K_{k,j}}-\abs{K_{k,i}\cap K_{k,-i}}-\abs{K_{k,-i}\cap K_{k,j}}-\abs{K_{k,i}\cap K_{k,j}}+\abs{K_{k,i}\cap K_{k,-i}\cap K_{k,j}}\\
	\geq&3(k+1)-0-2(k-1)=k+5>k+3=\abs{\mathbb{T}}+k.
\end{align*}
In the last case $\abs{\mathbb{T}}=3+1$ and by Fact 1 we get $\abs{\bigcup_{L\in\mathbb{T}}L}=2k+2>\abs{\mathbb{T}}+k$ .\\
Let's assume that Facts 2 holds for $k> 3$ and fix $\mathbb{T}\subset \mathbb{U}_{k}$ with $2\leq \abs{\mathbb{T}}\leq k+1$.
We define $\mathbb{T}^{(-1)}\coloneqq\{L\setminus \{k+1,-(k+1)\}\mid L\in\mathbb{T}\setminus\{K_{k,k},K_{k,-k}\}\}$.
Notice that $\mathbb{U}_{k-1}=\mathbb{U}_k^{(-1)}$.\\
Suppose there exists an index $i_0\in[k]$ such that both $K_{k,i_0}$ and $K_{k,-i_0}$ are not in $\mathbb{T}$.
Since $\mathfrak{S}_k$ acts on the subset $\mathbb{U}_k$, we can assume $i_0=k$ without loss of generality.
If both $K_{k,k}$ and $K_{k,-k}$ are not in $\mathbb{T}$ then 
\begin{equation*}
\abs{\mathbb{T}}+k=\abs{\mathbb{T}^{(-1)}}+(k-1)+1<\abs{\bigcup_{L\in \mathbb{T}^{(-1)}}L}+1\leq \abs{\bigcup_{L\in \mathbb{T}}L}.
\end{equation*}
Conversely, if one of $K_{k,i}$ or $K_{k,-i}$ is contained in $\mathbb{T}$ for all $i\in[k]$ then $\abs{\mathbb{T}}=k$ or $k+1$. When $\abs{\mathbb{T}}=k+1$, using Fact 1 it is easy to check that $\mathbb{T}$ satisfies the BBA-condition.
Now we just need to check the case when $\abs{\mathbb{T}}=k$.
In this case
\begin{equation*}
\abs{\bigcup_{L\in \mathbb{T}}L}=\left\{\begin{aligned}
&2k+2&&\mbox{ if }\abs{\mathbb{U}^+_k\cap\mathbb{T}}\geq 2\mbox{ and }\abs{\mathbb{U}^-_k\cap\mathbb{T}}\geq 2,\\
&2k+1&&\mbox{ otherwise .}
\end{aligned}\right.
\end{equation*}
Hence we have $\abs{\mathbb{T}}+k=2k<\abs{\bigcup_{L\in\mathbb{T}}L}$. Therefore we get $\nu(\mathbb{T}')=\abs{\mathbb{T}'}<\abs{\bigcup_{S\in\mathbb{T}'}S}-k$ for each $\mathbb{T}'\subset\mathbb{T}$ and $\mathbb{T}$ satisfies the BBA-condition. This concludes the proof of Fact 2 and we can go back to the proof of the theorem.\\

\noindent
Let's construct a minimal non-very generic intersection $X$ whose canonical presentation is $\mathbb{U}_k$.\\
Let $v_{-k},\ldots,v_{-1},\allowbreak v_1,\ldots,v_k$ be non-zero vectors in $\mathbb{R}^k$ $(\mathbb{C}^k)$ and for each $i\in [\pm(k+1)]$,  define the affine hyperplane $H'_i$ as the closure of $\{v_j\in\mathbb{R}^k\mid i\in K_{k,j}\}$, that is
\begin{equation*}
	H'_i=\left\{\sum_{j:i\in K_{k,j}}v_jx_j\in\mathbb{R}^k\;\left|\; \sum_{j:i\in K_{k,j}}x_j=1, x_j\in \mathbb{R}\right.\right\}.
\end{equation*}
If we denote by $\alpha_i$ a vector normal to $H_i'$ and define the linear hyperplane arrangement $\mathcal{A}=\{H_i\mid H_i\perp \alpha_i, i\in[\pm(k+1)]\ \}$ in $\mathbb{R}^k(\mathbb{C}^k)$, then there exists a Zariski closed set in the set of nonzero vector lists $(v_i)_{i\in[\pm k]}$ in $\mathbb{R}^k(\mathbb{C}^k)$ such that for each element in this Zariski closed set makes $\mathcal{A}$ is not generic.
We take $(v_i)_{i\in[\pm k]}$ from the complement of such Zariski closed set.
By construction, $\mathcal{B}(2k+2,k,\mathcal{A})$ has an intersection $X=\bigcap_{L\in\mathbb{U}_k}D_L$ whose canonical presentation is $\mathbb{U}_k$.\\
Finally we check the existence of $(v_i)_{i\in[\pm k]}$ such that the non-very generic intersection $X$ is minimal.
If $X$ is not minimal then there exists $\mathbb{T}\subset\mathbb{U}_k$ such that $2\leq\abs{\mathbb{T}}\leq n-k-1=k+1$ and $\rank\bigcap_{L\in\mathbb{T}} D_L<\abs{\mathbb{T}}$.
However, since such a set of nonzero vector lists $(v_i)_{i\in[\pm k]}$ in $\mathbb{R}^k(\mathbb{C}^k)$ becomes Zariski closed again, we can choose $(v_i)_{i\in[\pm k]}$ such that $X$ is minimal.
Then $X$ is a minimal non-very generic intersection of rank $n-k-1=k+1$ and Athanasiadis rank $a_X=2k=\abs{\mathbb{U}_k}$. It follows that $a_X=\rank(X)+k-1$ and $X$ is a dense intersection.
\end{proof}
\noindent
The following proposition holds.

\begin{prop}
   Let $X$ be a minimal non-very generic intersection in $\L(\B(n,k,\A)).$ If $X$ is dense then $\rank X\geq k+1$.
\end{prop}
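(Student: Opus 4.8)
The plan is to reduce the statement to a lower bound on the number $N=\abs{\bigcup_i S_i}$ of hyperplanes actually involved in $X$, and then to treat that bound as the heart of the argument. First I would record the two consequences of density. Since $X$ is a minimal non-very generic intersection, Theorem \ref{prop:UpperBoundOfAthNumb} gives $a_X<\rank X+\ell_X$; combined with the defining equality $a_X=\rank X+k-1$ of a dense intersection this forces $\ell_X>k-1$, and with Lemma \ref{prop:UpperBoundOfell} (which gives $\ell_X\le k$) we conclude $\ell_X=k$. Feeding $\ell_X=k$ into part (2) of Theorem \ref{Thm:B} yields $a_X\ge kN/(k+1)$.

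Next comes an arithmetic reduction. Because $a_X$ is an integer, the bound $a_X\ge kN/(k+1)$ already forces $a_X\ge 2k$ as soon as $N\ge 2k+1$: indeed $kN/(k+1)>2k-1$ is equivalent to $N>2k+1-\tfrac1k$, hence to $N\ge 2k+1$. As $X$ is dense we have $\rank X=a_X-k+1$, so $a_X\ge 2k$ is exactly $\rank X\ge k+1$. Thus it suffices to prove $N\ge 2k+1$. Equivalently, I argue by contradiction: assuming $\rank X\le k$ gives $a_X\le 2k-1$, and the same inequality then yields $N\le (k+1)(2k-1)/k<2k+1$, that is $N\le 2k$; the task becomes to derive a contradiction from $N\le 2k$.

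To exploit $N\le 2k$ I would use two structural features of the canonical presentation. First, distinct components are incomparable and meet in at most $k-1$ indices: if $\abs{S_i\cap S_j}\ge k$ then the $\ge k$ common generic hyperplanes already determine a single point, so the two concurrency loci collapse into $D_{S_i\cup S_j}$, and for $t\in S_j\setminus S_i$ one gets $X\subseteq D_{S_i\cup\{t\}}$, contradicting the maximality of $S_i$. Second, $\ell_X=k$ means every index lies in at least $k$ components, so writing $d_x$ for the number of components containing $x$ one has $d_x\ge k$; double counting then gives $\abs{S_i}\le m-1$ and hence $m\ge k+2$. Under $N\le 2k$ no two components can be disjoint, since two disjoint sets of size $\ge k+1$ would need $N\ge 2k+2$; therefore all pairwise intersections lie in $[1,k-1]$, and the identity $\sum_x\binom{d_x}{2}=\sum_{i<j}\abs{S_i\cap S_j}$ becomes tightly constrained. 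For small $k$ this is already contradictory: for instance when $k=3$ the value $N=2k=6$ forces every pairwise intersection to equal $k-1=2$ and every component to have size exactly $k+1$, which no admissible degree sequence $(d_x)$ can realize, and the same bookkeeping rules out $N\le 2k$.

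The step I expect to be the genuine obstacle is the elimination of $N\le 2k$ for large $k$. The purely combinatorial constraints above only bound $N$ from above, and for $k\ge 4$ they are in fact satisfiable at $N=2k$ (admissible degree sequences and pairwise intersections do exist), so the argument cannot be closed by counting alone. Here one must use that the $S_i$ are not an abstract set system but the concurrency pattern of a \emph{generic} arrangement which is moreover \emph{minimal} non-very generic: minimality forces every proper sub-intersection to meet transversally, a strong general-position condition on the normals $\{\alpha_L\}$ that cannot be met by so few hyperplanes. Converting this realizability requirement into a genuine lower bound for $\dim\,\mathrm{span}\{\alpha_L\}=\rank X$, thereby excluding $N\le 2k$, is the crux; I note that the extremal family $\mathbb{U}_k$ (with $N=2k+2$, $a_X=2k$, $\rank X=k+1$) shows the conclusion $\rank X\ge k+1$ is sharp.
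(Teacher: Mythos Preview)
Your reduction to $N\ge 2k+1$ is valid, and so is your diagnosis that pure double-counting on the set system cannot close it for $k\ge 4$; but you then stop, so the proposal is incomplete. The final paragraph gestures at a ``realizability'' argument via the span of the $\alpha_L$'s without supplying one, and that is precisely the missing step.

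The paper avoids the absolute bound on $N$ entirely. After obtaining $\ell_X=k$ as you do, it uses the raw double-count $a_X+mk\ge N\ell_X=Nk$ (Proposition~\ref{prop:AthNum}) together with density $a_X=\rank X+k-1$ to get
\[
\rank X\ge (N-m-1)k+1,
\]
so the task becomes showing the \emph{relative} inequality $N>m+1$ rather than $N\ge 2k+1$. This is where minimality enters, and not through linear algebra of the normals but through the BBA-condition: deleting any single index $j$ from every $S_i$ (and discarding those that drop to size $k$) produces a family $\mathbb{T}\setminus j$ which, by minimality of $X$, must satisfy the BBA-inequality for the full index set, namely
\[
(N-1)-k>a_X-\lvert\{i:j\in S_i\}\rvert.
\]
Choosing $j$ with $\lvert\{i:j\in S_i\}\rvert=\ell_X=k$ and invoking $a_X\ge m$ (Proposition~\ref{prop:AthNum}) gives $N-k-1>m-k$, i.e.\ $N\ge m+2$, and hence $\rank X\ge k+1$.

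The structural difference is that you tried to bound $N$ against the constant $2k$, which pushes you toward a delicate realizability argument you could not finish; the paper bounds $N$ against $m$, which drops out of a single application of the BBA-inequality to the deleted system. That use of minimality via BBA (rather than via spans of normals) is the idea you are missing.
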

\begin{proof}
Without loss of generality, let's assume that $n=\abs{\bigcup_{L\in\mathbb{T}} S}$ and denote by $\mathbb{T}=\{S_1,\dots,S_m\}$ the canonical presentation of $X$. Recall that $a_X=\rank(X)+k-1$ and that the $\mathrm{min}_{j\in \bigcup_{S\in\mathbb{T}}S}\abs{\{S\in\mathbb{T}\mid j\in S\}}=k$.
By Proposition \ref{prop:Bound_ell} and Theorem \ref{prop:UpperBoundOfAthNumb} the following inequalities hold:
\begin{equation*}
\rank X+(m+1)k-1=a_X+mk
\geq n\ell_X\geq nk.
\end{equation*}
Hence, we have that $\rank X\geq (n-m-1)k+1$.
Since $X$ is minimal, $\mathbb{T}\setminus j:= \{S\setminus j\mid S\in \mathbb{T}\}\setminus \binom{[n]}{k}$ has to satisfy BBA-condition for any $j\in \bigcup_{S\in \mathbb{T}} S$, that is
\begin{align*}
n-k-1=\abs{\bigcup_{S\in\mathbb{T}}S\setminus\{j\}}-k=\abs{\bigcup_{S\in \mathbb{T}\setminus j}S}-k
> a_{\left(\bigcap_{L\in \mathbb{T}\setminus j}D_S \right)}= a_X-\abs{\{S\in \mathbb{T}\mid j\in S\}}.
\end{align*}
In particular, since $\mathrm{min}_{j\in \bigcup_{S\in\mathbb{T}}S}\abs{\{S\in\mathbb{T}\mid j\in S\}}=k$ and $a_X \geq m$, it follows that 
\begin{equation*}
n-k-1> a_X-k\geq m-k.
\end{equation*}
Adding $k$ to both sides, we get $n-1>m$ and hence $\rank X\geq (n-m-1)k+1\geq k+1$.
\end{proof}
\noindent
\noindent


\section{Description of rank $2$ intersections in the discrminantal arrangements}\label{sec:rank2}
In this section, we correct the result on rank $2$ intersections in discriminantal arrangements provided by Libgober and the third author in \cite{LS} which stated that the multiplicity of the rank $2$ intersections in discriminantal arrangements $\B(n,k,\A)$ is $2,3$ or $k+2$ (see \cite{LS}). We show that we can construct intersections of rank $2$ with multiplicity up to $\frac{n}{n-k-1}$.

\begin{thm}\label{prop:Rank2AthNum}
Let $X=\bigcap_{i=1}^mD_{L_i}$ be an intersection in $\L(\B(n,k,\A)), n=\mid \bigcup_{i=1}^m L_i \mid$.
If the $\rank(X)=2$ then $X$ satisfies $a_X\leq \frac{n}{n-k-1}$. Moreover if $\frac{n}{n-k-1}$ is an integer then there exists an arrangement consisting of $n$ hyperplanes in $\mathbb{R}^k$ with non-very generic intersection $X$ satisfying $a_X= \frac{n}{n-k-1}$.
\end{thm}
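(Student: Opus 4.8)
The plan is to split on the sizes of the sets in the canonical presentation $\{S_1,\dots,S_p\}$ of $X$, reduce the upper bound to a disjointness statement about the complements of the $L_i$, and treat the sharpness part by a separate realization argument. First I would note that every component satisfies $\abs{S_j}\le k+2$: since $X\subset D_{S_j}$ we have $\rank D_{S_j}=\abs{S_j}-k\le \rank X=2$. If some $\abs{S_j}=k+2$, then $\rank D_{S_j}=2=\rank X$ forces $X=D_{S_j}$, and one checks this makes $\{S_j\}$ the whole canonical presentation (any further component $S'$ would give $X\subset D_{S'\cup\{l\}}$, contradicting maximality), so $a_X=2$; as a rank-$2$ intersection requires $n=\abs{\bigcup_i L_i}\ge k+2$, the bound $2\le n/(n-k-1)$ holds trivially. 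Hence I may assume every component has size $k+1$, i.e. the canonical presentation is $\{L_1,\dots,L_m\}$ with $\abs{L_i}=k+1$, so $a_X=\sum_i(\abs{L_i}-k)=m$ and the claim becomes $m\le n/(n-k-1)$.

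For this inequality I would work inside $V:=X^\perp\subset\mathbb{S}\cong\mathbb{R}^n$, which is $2$-dimensional and, by the description of $X^\perp$ used in the proof of Lemma \ref{prop:Bound_ell}, is spanned by the normals $\alpha_{L_i}$; by genericity of $\A$ each $\alpha_{L_i}$ has nonzero entries exactly on $L_i$, since its $j$-th entry is a $k\times k$ minor of the normals of $\A$. For each coordinate $j\in[n]$ put $\ell_j:=V\cap\{x_j=0\}$. Because $j$ lies in some $L_i$, the vector $\alpha_{L_i}\in V$ has nonzero $j$-entry, so $V\not\subset\{x_j=0\}$ and $\ell_j$ is a line. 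If $j\notin L_i$ then $\alpha_{L_i}$ has zero $j$-entry, hence $0\ne\alpha_{L_i}\in\ell_j$ and $\ell_j=\langle\alpha_{L_i}\rangle$. The lines $\langle\alpha_{L_i}\rangle$ are pairwise distinct, since parallel vectors have equal support and $\langle\alpha_{L_i}\rangle=\langle\alpha_{L_{i'}}\rangle$ would force $L_i=L_{i'}$. Therefore the fibres of the well-defined map $j\mapsto\ell_j$ over the $m$ distinct lines $\langle\alpha_{L_i}\rangle$ are disjoint, and the fibre over $\langle\alpha_{L_i}\rangle$ is exactly $[n]\setminus L_i$, of size $n-k-1$. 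Their disjoint union lies in $[n]$, giving $m(n-k-1)\le n$, i.e. $a_X=m\le n/(n-k-1)$.

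For the sharpness statement, assume $m:=n/(n-k-1)$ is an integer; I would also require $m\ge 3$, since for $n-k-1=1$ one has $n=k+2$ and the intersection $\bigcap_i D_{L_i}$ collapses to the very generic $D_{[n]}$ with $a_X=2$, so no non-very generic example with $a_X=m$ exists there. The equality case of the counting fixes the combinatorics: partition $[n]$ into $m$ blocks $B_1,\dots,B_m$ of equal size $d=n-k-1$ and set $L_i=[n]\setminus B_i$, so $\abs{L_i}=k+1$ and the $B_i$ are disjoint and cover $[n]$. I would first build the target datum inside $V=\mathbb{R}^2$: choose $m$ pairwise distinct lines $\theta_1,\dots,\theta_m$ through the origin and, for $j\in B_i$, generic nonzero points $(u_j,w_j)\in\theta_i$; then the $2\times n$ matrix with rows $u,w$ has row space $V$, and the vector of $V$ vanishing exactly on $B_i$ (equivalently nonzero exactly on $L_i$) is the prescribed cofactor vector $\alpha_{L_i}$. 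It then remains to realize these prescribed $\alpha_{L_i}$ by an actual generic arrangement, i.e. to produce normals $\alpha_1,\dots,\alpha_n\in\mathbb{R}^k$ (any $k$ of them independent) whose cofactor vectors are the $\alpha_{L_i}$, and this is exactly where I would invoke Falk's adjoint construction to pass from the block/line data to $\A$. Outside a proper Zariski-closed locus of the construction parameters one checks that $\A$ is generic, that $\rank X=\dim V=2$, that the canonical presentation of $X$ is precisely $\{L_1,\dots,L_m\}$ (no size-$(k+2)$ component can occur, as the $m\ge 3$ distinct lines $\langle\alpha_{L_i}\rangle$ preclude $X=D_S$, and each $L_i$ is maximal), so that $a_X=m=n/(n-k-1)>2=\rank X$ and $X$ is non-very generic.

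The main obstacle is the realization step: inverting the cofactor (Plücker) map to recover a bona fide generic arrangement in $\mathbb{R}^k$ from the prescribed $2$-plane of vectors $\alpha_{L_i}$, while simultaneously ensuring that the imposed rank-$2$ degeneracy is compatible with the genericity of $\A$ and that no accidental coincidences enlarge the canonical presentation. This compatibility is the content of Falk's adjoint construction, and it is the reason the smallest honest instance only appears at $(n,k)=(8,5)$, where $n/(n-k-1)=4$.
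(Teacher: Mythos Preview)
Your argument for the upper bound is correct and arrives at the same key fact as the paper---that the complements $[n]\setminus L_i$ are pairwise disjoint---but by a somewhat different and arguably more transparent route. The paper shows directly that $L_{i_1}\cup L_{i_2}=[n]$ for every pair (equation \eqref{eq:RankAndCard}); the implicit reason is that any $\alpha_{L_{i_0}}\in X^\perp=\langle\alpha_{L_{i_1}},\alpha_{L_{i_2}}\rangle$ must have support contained in $L_{i_1}\cup L_{i_2}$. You package the same support observation as the map $j\mapsto\ell_j=V\cap\{x_j=0\}$ on the $2$-plane $V=X^\perp$ and read off disjointness from the fact that fibres over the distinct lines $\langle\alpha_{L_i}\rangle$ are automatically disjoint. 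Your preliminary reduction ruling out a component of size $k+2$ is correct---the support argument forces any further component $S'$ to satisfy $S'\subset S_j$, whence $S'$ cannot be maximal---though your parenthetical hint is terse on this point and would benefit from stating the support step explicitly. For sharpness, you and the paper follow the same path through Falk's adjoint construction (Gale duality): the paper builds the dual picture explicitly in $\mathbb{R}^{n-k}$ (placing the $\beta_{i,j}$ on hyperplanes through a fixed codimension-$2$ flat, as in Example \ref{ex:generalrank2}), while you describe the same data projected to $\mathbb{R}^2$ and correctly flag the realization back in $\mathbb{R}^k$ as the remaining step. Your side remark that one needs $m\ge3$ (equivalently $n-k-1\ge2$) for the constructed $X$ to be genuinely non-very generic is a useful caveat that the theorem statement glosses over.
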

\noindent
\begin{proof}If $X$ is very generic then the statement holds.
Let $X=\bigcap_{i=1}^mD_{L_i}$ be a rank $2$ non-very generic intersection, $\{L_1,\dots,L_m\}$ canonical presentation, then
\begin{equation}\label{eq:RankAndCard}
    L_{i_1}\cup L_{i_2}=\bigcup_{i=1}^m L_i, \quad 1\leq i_1< i_2\leq m.
\end{equation} 
Indeed if $j \in \bigcup_{i=1}^m L_i\setminus(L_{i_1}\cup L_{i_2}) \neq \emptyset$ then there exists $i_o\in[m]$ such that $j \in L_{i_0}$ and $D_{L_{i_0}}\supset X=D_{L_{i_1}}\cap D_{L_{i_2}}\supset D_{L_{i_1}\cup L_{i_2}}$ which is an absurd. From equation \eqref{eq:RankAndCard} it follows that $L_i\cup L_j=[n]$, for each $i,j\in[m]$ 
or, equivalently, $[n]\setminus L_i\subset L_j$.
Thus the inequality 
\begin{equation*}
    n=\abs{[n]}=\sum_{i=1}^m \abs{[n]\setminus L_i}+\abs{\bigcap_{i=1}^m L_i}\geq \sum_{i=1}^m \abs{[n]\setminus L_i}=m(n-k-1) 
\end{equation*}
holds and hence $a_X\leq \frac{n}{n-k-1}$. Finally, since $\frac{n}{n-k-1}\leq k+1$ holds for $n-2>k>1$, Theorem \ref{prop:Rank2AthNum} is obtained as a corollary of Theorem \ref{prop:UpperBoundOfAthNumb} for rank $2$ intersections.
\end{proof}
\noindent
Notice that every rank $2$ non-very generic intersections in $\L(\B(n,k,\A))$ are simple,
i.e. the canonical presentation of $X=\bigcap_{i=1}^m D_{L_i}$ satisfies $\abs{L_i}=k+1$ for $i=1,\dots,m$ and $a_X=m$.\\
The following arrangement of $8$ hyperplanes in $\mathbb{R}^5$ is the first example containing a rank 2 intersection having multiplicity different from the ones mentioned in the result by Libgober and the third author.

\begin{ex}
    Let $\A=\{H_1,\ldots,H_8\}$ be the central arrangement of hyperplanes $H_i$'s orthogonal to $\alpha_i$'s defined by:
    \begin{equation*}
        \left(\alpha_1\alpha_2\alpha_3\alpha_4\alpha_5\alpha_6\alpha_7\alpha_8\right)=
        \begin{pmatrix}
        1&0&0&0&0&1&2&3\\
        0&1&0&0&0&1&-2&-1\\
        0&0&1&0&0&1&3&2\\
        0&0&0&1&0&1&-1&-2\\
        0&0&0&0&1&1&1&1\\
        \end{pmatrix}.
    \end{equation*}
The arrangement $\A$ is generic and $X=D_{123456}\cap D_{123478}\cap D_{125678}\cap D_{345678} \in \L(\B(8,5,\A))$ has canonical presentation given by \{\{1,2,3,4,5,6\},\{1,2,3,4,7,8\},\{1,2,5,6,7,8\},\{3,4,5,6,7,8\}\}, that is $a_X=4$. Moreover it is an easy computation to verify that the rank of the matrix 
    \begin{equation*}
        \begin{pmatrix}
        \alpha_{123456}\\ \alpha_{123478}\\ \alpha_{125678}\\ \alpha_{345678}
        \end{pmatrix}=
        \begin{pmatrix}
        1&1&1&1&1&-1&0&0\\
        1&1&-1&-1&0&0&1&-1\\
        -8&-8&0&0&-4&4&-4&4\\
        0&0&8&8&4&-4&-4&4\\
        \end{pmatrix}.
    \end{equation*}
defined by the vectors orthogonal to $D_{123456}, D_{123478}, D_{125678}, D_{345678}$ is equal to $2$.  That is $X$ is an intersection of $\rank(X)=2$ and multiplicity $4$ in $\B(8,5,\A)$.
\end{ex}
\noindent
It is possible to provide a family of such examples generalizing the Falk's adjoint construction (see \cite{Falk}).
\begin{defi}
    Let $\{\alpha_1,\ldots,\alpha_n\}$ be a set of vectors in $\mathbb{R}^k$.
    The \textbf{Gale diagram} of $\{\alpha_1,\ldots,\alpha_n\}$ is a set $\{\beta_1,\ldots,\beta_n\}$ of vectors in $\mathbb{R}^{n-k}$ such that the row space of $(\beta_1\ldots\beta_n)$ is the orthogonal complement of the row space of $(\alpha_1\ldots\alpha_n)$.
\end{defi}
\noindent
Remark that, if $\{\alpha_1,\ldots\alpha_n\}$ is in general position then its Gale diagram is also in general position. Also, it is known that the Gale diagram construction is a dual operation.
For more properties of Gale diagrams, refer to \cite{Zie}.
\begin{thm}[see \cite{Falk}]
    Let $\A$ be a generic arrangement with normals $\{\alpha_1,\ldots\alpha_n\}$ in $\mathbb{R}^k$.
    The essential part of $\B(n,k,\A)$ is linearly equivalent to the arrangement consisting of all hyperplanes spanned by $n-k-1$ elements in the Gale diagram of $\{\alpha_1,\ldots\alpha_n\}$.
\end{thm}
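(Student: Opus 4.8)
The plan is to exhibit an explicit linear isomorphism between the ambient space of the essential part of $\B(n,k,\A)$ and $\mathbb{R}^{n-k}$, and to check that it carries each hyperplane $D_L$ to the hyperplane spanned by the $n-k-1$ Gale vectors indexed by the complement $[n]\setminus L$.

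First I would record two facts about the center of $\B(n,k,\A)$ inside $\mathbb{S}\cong\mathbb{R}^n$. On one hand, the defining equation of $D_L$ is the single linear relation $\langle\alpha_L,t\rangle=0$ among the coordinates $t_i$, $i\in L$, where $\alpha_L$ is the vector of signed minors appearing in the proof of Lemma \ref{prop:Bound_ell}; in particular it constrains only the coordinates indexed by $L$. On the other hand, writing $A=(\alpha_1\cdots\alpha_n)$, its row space $R_A=\{(\langle\alpha_i,y\rangle)_{i\in[n]}\mid y\in\mathbb{R}^k\}$ has dimension $k$ and is contained in every $D_L$: a point $t=(\langle\alpha_i,y\rangle)_i\in R_A$ encodes the coherent translation for which all $H_i^{t_i}$ pass through the common point $y$, so in particular $\bigcap_{i\in L}H_i^{t_i}\ni y\neq\emptyset$. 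Since the discriminantal arrangement has rank $n-k$ for generic $\A$ (see \cite{MS}), its center $\bigcap_L D_L$ has dimension $k$ and therefore equals $R_A$; consequently the essential part of $\B(n,k,\A)$ lives in the quotient $\mathbb{R}^n/R_A$.

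Next I would use the Gale matrix $B=(\beta_1\cdots\beta_n)$ to build the isomorphism. Consider the linear map $\Phi\colon\mathbb{R}^n\to\mathbb{R}^{n-k}$ given by $\Phi(t)=\sum_{i\in[n]}t_i\beta_i=Bt$. By the defining property of the Gale diagram the row space $R_B$ equals $R_A^\perp$, so $\ker\Phi=(R_B)^\perp=R_A$; since the $\beta_i$ span $\mathbb{R}^{n-k}$, the map $\Phi$ is surjective and descends to a linear isomorphism $\overline{\Phi}\colon\mathbb{R}^n/R_A\xrightarrow{\ \sim\ }\mathbb{R}^{n-k}$. This $\overline{\Phi}$ is the claimed linear equivalence, and it remains to identify the image of each $D_L$. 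For this I would show $\Phi(D_L)=\mathrm{span}\{\beta_j\mid j\in[n]\setminus L\}$. Because the equation of $D_L$ involves only the coordinates in $L$, the coordinate subspace $\{t\mid t_i=0\ \forall i\in L\}$ is contained in $D_L$, whence $\mathrm{span}\{\beta_j\mid j\notin L\}=\Phi(\{t\mid t_i=0\ \forall i\in L\})\subseteq\Phi(D_L)$. Both of these are hyperplanes of $\mathbb{R}^{n-k}$: the right-hand side because $D_L$ is a hyperplane of $\mathbb{R}^n$ containing $\ker\Phi$, so its image has dimension $(n-1)-k=n-k-1$; and the left-hand side because genericity of $\A$ forces the Gale diagram into general position (as noted in the Remark), so the $n-k-1$ vectors $\{\beta_j\mid j\notin L\}$ are independent and span a hyperplane. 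An inclusion of hyperplanes of equal dimension is an equality, so $\overline{\Phi}$ sends $D_L$ (mod $R_A$) to the hyperplane spanned by $\{\beta_j\mid j\in[n]\setminus L\}$. Since $L\mapsto[n]\setminus L$ is a bijection $\binom{[n]}{k+1}\to\binom{[n]}{n-k-1}$, the isomorphism $\overline{\Phi}$ identifies the essential part of $\B(n,k,\A)$ with the arrangement of all hyperplanes spanned by $n-k-1$ Gale vectors.

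The main obstacle I expect is the bookkeeping in the middle step: pinning down that the center of $\B(n,k,\A)$ is exactly $R_A$ (equivalently, that the rank equals $n-k$ rather than something smaller) and matching it with $\ker\Phi$, together with the verification that general position of the $\beta_i$ is genuinely inherited from genericity of $\A$, so that each family $\{\beta_j\mid j\notin L\}$ really spans a hyperplane. Once these dimension counts are secured, the containment-of-equal-dimensional-hyperplanes argument closes the proof without further computation.
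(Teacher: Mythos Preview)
The paper does not supply its own proof of this theorem: it is quoted as a result of Falk \cite{Falk} and used as a black box for the constructions in Section~\ref{sec:rank2}. There is therefore no in-paper argument to compare against.

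That said, your argument is correct and is essentially the standard proof. The key points---that the center $\bigcap_L D_L$ equals the row space $R_A$, that the Gale matrix $B$ induces a surjection $\Phi$ with $\ker\Phi=R_A$, and that $\Phi(D_L)=\mathrm{span}\{\beta_j\mid j\notin L\}$ via the inclusion-of-equal-dimensional-hyperplanes trick---are all sound. The only places to tighten are exactly the ones you flag: the equality $\dim(\bigcap_L D_L)=k$ (i.e.\ rank $n-k$) is where genericity of $\A$ is genuinely used, and the general-position claim for the Gale vectors can be made explicit by the standard minor identity $\det(\beta_{j_1}\cdots\beta_{j_{n-k}})=\pm\det(\alpha_{i_1}\cdots\alpha_{i_k})$ for complementary index sets, which transfers nonvanishing of $k\times k$ minors of $A$ to nonvanishing of $(n-k)\times(n-k)$ minors of $B$. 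With those two checks filled in, the proof is complete.
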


\noindent
By the above theorem, it follows that if we can construct an arrangement generated by $n$ points $\beta_1,\ldots,\beta_n$ in an $(n-k)$-dimensional space which intersection lattice contains an element $X$ of rank $2$ and multiplicity $\frac{n}{n-k-1}$, then the inequality in Theorem \ref{prop:Rank2AthNum} is optimal. Notice that $n$ and $k$ have to satisfy that $\frac{n}{n-k-1}$ is an integer. Let's start this construction assuming $n-k-1=2$.
\begin{ex}\label{ex:n-k=3rank2}\textbf{Case $n-k-1=2$.} Let $n\geq 6$ be an even number, set $k=n-3$ and consider the points
    \begin{equation*}
        (\beta_1\beta_2\ldots\beta_n)=\begin{pmatrix}
            \cos\frac{2\pi}{n}&\cos\frac{2\cdot 2\pi}{n}&\dots&\cos\frac{j\cdot 2\pi}{n}&\dots&\cos\frac{n\cdot 2\pi}{n}\\
            \sin\frac{2\pi}{n}&\sin\frac{2\cdot 2\pi}{n}&\dots&\sin\frac{j\cdot 2\pi}{n}&\dots&\sin\frac{n\cdot 2\pi}{n}\\
            1&1&\dots&1&\dots&1
        \end{pmatrix}.
    \end{equation*}
    It is easy an easy computation to verify that $\beta_1,\ldots,\beta_n$ are in general position, i.e. $\det(\beta_{i_1}\beta_{i_2}\beta_{i_3})\neq 0$ for all $1\leq i_1<i_2<i_3\leq n$. The $1$-dimensional subspace $X=\langle(0,0,1)^\mathsf{T}\rangle$ is included in all subspaces $\langle\beta_i,\beta_{i+\frac{n}{2}}\rangle, i=1,\dots,\frac{n}{2}$.
    Hence the arrangement generated by $\beta_1,\ldots,\beta_n$ has a rank $2$ intersection $X$ of  multiplicity $\frac{n}{2}=\frac{n}{n-k-1}$. The Gale diagram $\{\alpha_1,\ldots,\alpha_n\}$ of $\{\beta_1,\ldots,\beta_n\}$ defines the arrangement $\{H_1,\dots,H_n\}$ with an intersection of multiplicity $\frac{n}{2}=\frac{n}{n-k-1}$ in the rank $2$ intersection set of its discriminantal arrangement.
\end{ex}
\noindent
The following example provides the general construction.
\begin{ex}\label{ex:generalrank2} Choose $n$ and $k$ such that  $n-k-1$ divides $n$. Let $X$ be a linear subspace of dimension $n-k-2$ in $\mathbb{R}^{n-k}$ and $D_1,\dots,D_{\frac{n}{n-k-1}}$ be hyperplanes in $\mathbb{R}^{n-k}$ such that $D_i\supset X$ for each $i=1,\dots,\frac{n}{n-k-1}$.
For each $i=1,\dots,\frac{n}{n-k-1}$, fix $n-k-1$ vectors $\beta_{i,1},\ldots,\beta_{i,n-k-1} \in D_i$ in the complement of $X$. Since the set of generic arrangements in the space of arrangements is a Zariski open set it is possible to choose $\beta_{i,j}$'s such that $\{\beta_{i,j}\mid 1\leq i\leq \frac{n}{n-k-1},1\leq j\leq n-k-1\}$ are in general position. The hyperplanes $D_i=\langle\beta_{i,1},\ldots,\beta_{i,n-k-1}\rangle, i=1,\dots,\frac{n}{n-k-1}$ intersect in a rank $2$ intersection $X$ of multiplicity $\frac{n}{n-k-1}$ providing the arrangement we want to construct.
\end{ex}
\noindent
\begin{rem}
Only the part 3 in Theorem 3.9 in \cite{LS} requires to be modified while 
the parts 1,2 and 4 hold. In particular we retrieve the part 2 as a consequence of Equation \eqref{eq:RankAndCard} which states that if three hyperplanes $D_{L_1}, D_{L_2}, D_{L_3}$ intersect in rank $2$, then $L_1 \subset L_2 \cup L_3$, $L_2 \subset L_1 \cup L_3$, and $L_3 \subset L_1 \cup L_2$.
Therefore, by setting $L'_i = L_i \setminus \bigcap_{j=1}^3 L_j$, we obtain the good-$3$-partition $\{L'_1 \cap L'_2, L'_1 \cap L'_3, L'_2 \cap L'_3\}$, which corresponds to the multiplicity $3$ intersection of rank $2$ in the discriminantal arrangement determined by the dependent restriction $\mathcal{A}^{X}$, $X=\bigcap_{i \in L_1 \cap L_2 \cap L_3} H_i$.\\
Notice that in terms of Gale duality, the restriction $\mathcal{A}^{X}$, $X=\bigcap_{i \in L_1 \cap L_2 \cap L_3} H_i$, is equivalent to the deletion of $\{\beta_i \mid i \in \bigcap_{j=1}^3 L_j\}$ from $\{\beta_i \mid i=1, \ldots, n\}$.
\end{rem}


\begin{thebibliography}{9}
\bibitem{Atha} Athanasiadis C. A., The Largest Intersection Lattice of a Discriminantal Arrangement, Contributions to Algebra and Geometry, Vol. 40, No. 2 (1999).
\bibitem{BB} Bayer M., Brandt K., Discriminantal arrangements, fiber polytopes and formality, J. Algebraic Comb., 6 (1997), pp. 229-246. 
\bibitem{crapo} Crapo H., The combinatorial theory of structures, Matroid Theory, Colloquia mathematica Societatis Janos Bolyai, vol. 40, North-Holland, 1985, pp. 107 213.
\bibitem{Falk} Falk M., A note on discriminantal arrangements, Proc. Amer. Math. Soc., 122 (4) (1994), pp. 1221-1227.
\bibitem{LS} Libgober A. and Settepanella S., Strata of discriminantal arrangements, Journal of Singularities Volume in honor of E. Brieskorn 18 (2018).
\bibitem{MS} Manin Yu. I. and Schechtman V. V.,  Arrangements of Hyperplanes, Higher Braid Groups and Higher Bruhat Orders, Advanced Studies in Pure Mathematics, 1989 Algebraic Number Theory in honor K. Iwasawa,  289-308.
\bibitem{Saito} Saito T., Degeneration in discriminantal arrangements, arXiv: 2404.18835, 2024.
\bibitem{SY} Settepanella S. and Yamagata S., On the non-very generic intersections of discriminantal arrangements, Comptes Rendus Mathematique, 2022.
\bibitem{SeSo} Settepanella S. and Yamagata S.,A linear condition for non-very generic discriminantal arrangements, arXiv-2205.04664,2022.
\bibitem{Zie} Ziegler G. M., Lectures on polytopes. Vol. 152. Springer Science \& Business Media, 2012.

\end{thebibliography}
\end{document}